\newcommand{\SCI}{\operatorname{SCI}}
\newcommand{\diam}{\operatorname{diam}}
\renewcommand{\i}{\mathrm{i}}
\DeclareMathOperator{\Res}{Res}
\newcommand{\f}{\frac}
\newcommand{\im}{\operatorname{Im}}
\newcommand{\R}{\mathbb R}
\newcommand{\C}{\mathbb C}
\newcommand{\N}{\mathbb N}
\newcommand{\Z}{\mathbb Z}
\newcommand{\re}{\operatorname{Re}}
\newcommand{\eps}{\varepsilon}
\renewcommand{\epsilon}{\varepsilon}
\newcommand{\Om}{\Omega}
\newcommand{\supp}{\operatorname{supp}}
\newcommand{\dist}{\operatorname{dist}}
\newcommand{\h}{\mathcal{H}}
\newenvironment{enumi}{\begin{enumerate}[(i)]}{\end{enumerate}}
\def\Xint#1{\mathchoice
{\XXint\displaystyle\textstyle{#1}}%
{\XXint\textstyle\scriptstyle{#1}}%
{\XXint\scriptstyle\scriptscriptstyle{#1}}%
{\XXint\scriptscriptstyle\scriptscriptstyle{#1}}%
\!\int}
\def\XXint#1#2#3{{\setbox0=\hbox{$#1{#2#3}{\int}$ }
\vcenter{\hbox{$#2#3$ }}\kern-.6\wd0}}
\def\dashint{\Xint-}
\newcommand{\arg@parser}[1]{%
  \advance\arg@count\@ne
  \expandafter\let\csname arg\romannumeral\arg@count\endcsname\comma@entry
}
\newcommand\res[1]{
  \arg@count=\z@
  \comma@parse{ \lambda,A }\arg@parser 
  \arg@count=\z@
  \comma@parse{#1}\arg@parser
  \ifnum\arg@count>2 %
    \@latex@error{Too many arguments}{%
      The macro \string\mycmd\space got \the\arg@count\space
       arguments,\MessageBreak
      but expected are 2 arguments.\MessageBreak
      \@ehd
    }%
  \fi
  \edef\process@me{%
    \noexpand\@res
    {\etex@unexpanded\expandafter{\argi}}%
    {\etex@unexpanded\expandafter{\argii}}%
  }%
  \process@me
}
\newcommand{\@res}[2]{%
  \ensuremath\left( #1 - #2 \right)^{-1}
}
\newcommand\p[1]{
  \arg@count=\z@
  \comma@parse{  }\arg@parser 
  \arg@count=\z@
  \comma@parse{#1}\arg@parser
  \ifnum\arg@count=2 %
  \else
    \@latex@error{Wrong number of mandatory arguments}{%
      The macro \string\p\space got \the\numexpr\arg@count-2\relax\space
      mandatory arguments,\MessageBreak
      but expected are 3 mandatory arguments.\MessageBreak
      \@ehd
    }%
  \fi
  \edef\process@me{%
    \noexpand\@p
    {\etex@unexpanded\expandafter{\argi}}%
    {\etex@unexpanded\expandafter{\argii}}%
  }%
  \process@me
}
\newcommand{\@p}[2]{%
  \ensuremath \left\langle #1 , #2 \right\rangle
}
\numberwithin{equation}{section}
\theoremstyle{definition}
\newtheorem{de}{Definition}[section]
\theoremstyle{plain}
\newtheorem{lemma}[de]{Lemma}
\newtheorem{theorem}[de]{Theorem}
\newtheorem{cl}[de]{Claim}
\theoremstyle{remark}
\newtheorem{remark}[de]{Remark}
\title{Computing scattering resonances}
\author{Jonathan Ben-Artzi}
\email{Ben-ArtziJ@cardiff.ac.uk}
\author{Marco Marletta}
\email{MarlettaM@cardiff.ac.uk}
\author{Frank R\"{o}sler}
\email{RoslerF@cardiff.ac.uk}
\thanks{JBA and FR acknowledge support from an Engineering and Physical Sciences Research Council Fellowship (EP/N020154/1).}
\address{School of Mathematics, Cardiff University, Senghennydd Road, Cardiff CF24 4AG, Wales, UK}
\date\today
\keywords{Scattering resonance, Solvability Complexity Index, Computational complexity}
\subjclass[2010]{47N40, 68Q25,  35B34}
\begin{document}

\maketitle

\begin{abstract}
The question of whether it is possible to compute scattering resonances of Schr\"odinger operators -- independently of the particular  potential -- is addressed. A positive answer is given, and it is shown that the only information required to be known \emph{a priori} is the size of the support of the potential. The potential itself is merely required to be $\mathcal{C}^1$. The proof is constructive, providing a universal algorithm which only needs to access the values of the potential at any requested point.
\end{abstract}

\setcounter{tocdepth}{1}

\section{Introduction and Main Result}
This paper provides an affirmative answer to the following question:
	\begin{quote}\emph{Does there exist a  universal algorithm for computing the  resonances of Schr\"odinger operators with complex potentials?}
	\end{quote}

To the authors' best knowledge this is the first time this question is addressed. Furthermore, the proof of existence  provides an actual algorithm (that is, the proof is constructive). We test this algorithm on some standard examples, and compare to known results.

The framework required for this analysis is furnished by the \emph{Solvability Complexity Index} ($\SCI$), which is an abstract theory for the classification of the computational complexity of problems that are infinite-dimensional. This framework has been developed over the last decade by Hansen and collaborators (cf. \cite{Hansen11,AHS,Ben-Artzi2015a}) and draws inspiration from   the seminal result \cite{DM} on solving quintic equations via a \emph{tower of algorithms}. {\bf We therefore emphasize that ours is an abstract result in analysis,  not  in numerical analysis.}

\subsection{Quantum Resonances}\label{sec:background}
Let us first define what a quantum resonance is. Let $q:\R^d\to\C$ be compactly supported, let
	\begin{equation*}
	H_q:=-\Delta+q
	\end{equation*}
be the associated Schr\"odinger operator in $L^2(\R^d)$ and let $\chi:\R^d\to\R$ be some compactly supported function with $\chi\equiv 1$ on $\supp(q)$. 
It follows from the explicit form of the free fundamental solution (cf. eq. \eqref{eq:Fundamental_Solution} below) that the map
	\begin{equation*}
	z\mapsto I + q(-\Delta-z^2)^{-1}\chi
	\end{equation*}
is an analytic operator-valued function on $\C\setminus\{0\}$. We define:

\begin{de}[Resonance]\label{def:res}
 A \emph{resonance} of $H_q$ is defined
to be a pole of the meromorphic operator-valued function $z\mapsto (I + q(-\Delta-z^2)^{-1}\chi)^{-1}$.
\end{de}
This definition is independent of the specific choice of $\chi$ (so long as $\chi\equiv 1$ on $\supp(q)$), and coincides with the poles of the scattering matrix of $q$, cf. \cite[Prop. 8]{Melrose} and \cite[III.5]{LP}.\\

Resonances can be regarded as states whose wave function disperses very slowly in time, and can therefore be considered as ``almost bound states''. 
In physics, such phenomena arise in the description of unstable particles and radioactive decay. Resonant states, just like eigenfunctions, can only exist at  certain energies. The slow-dispersal-in-time approach to resonances motivates one of the earlier definitions of resonances used in the computational physics literature, namely maximization of the so-called time delay function -- see, e.g., Le Roy and Liu \cite{LRL} and Smith \cite{Smith}. This approach leads to real resonance energies for real-valued potentials and, in the one-dimensional case at least, is closely related to the concept of spectral concentration -- see, e.g., Eastham \cite{MSPE}, which describes one mechanism by which such concentrations may arise. For additional discussion we refer to the review article \cite{Zworski1999} and the book \cite{DZ}.

It is widely accepted that the  reliable computation of resonances is a challenging task. This is not usually due to the intrinsic ill-posedness of analytic continuation, since that step is usually done explicitly, but rather due to the fact that complex scaling changes resonance problems either into non-selfadjoint spectral problems, for which the pseudospectra may be far from the spectrum \cite{ET},  or into problems with a nonlinear dependence on the spectral parameter, for which sensitivity to perturbations may also be problematic.  In this context we refer to \cite{Bogli2014}  (including the references and discussion therein) where interval-arithmetic was used to compute resonances.

We show that resonances can be computed as the limit of a sequence of approximations, each of which can be computed precisely using finitely many arithmetic operations. The proof is constructive: we define an algorithm and prove its convergence. We emphasize that this \emph{single} algorithm is valid for \emph{any} Schr\"odinger operator $H_q$ as defined above, so long as   $\diam(\supp(q))$  satisfies an \emph{a priori} bound. We  implement this algorithm in one-dimension and compare its performance to that of Bindel and Zworski \cite{Bindel}.

 \subsection{The Solvability Complexity Index}\label{subsec:sci}
The Solvability Complexity Index (SCI) addresses questions which are at the nexus of pure and applied mathematics, as well as computer science: 
 	\begin{quote}\emph{How do we compute objects that are ``infinite'' in nature if we can only handle a finite amount of information and perform finitely many mathematical operations? Indeed, what do we even mean by ``computing'' such an object?}
	\end{quote}
These broad topics are addressed in the sequence of papers \cite{Hansen11,AHS,Ben-Artzi2015a}. Let us summarize the main definitions and discuss how these relate to our problem of finding resonances:
\begin{de}[Computational problem]\label{def:computational_problem}
	A \emph{computational problem} is a quadruple $(\Om,\Lambda,\Xi,\mathcal M)$, where 
	\begin{enumi}
		\item $\Om$ is a set, called the \emph{primary set},
		\item $\Lambda$ is a set of complex-valued functions on $\Om$, called the \emph{evaluation set},
		\item $\mathcal M$ is a metric space,
		\item $\Xi:\Om\to \mathcal M$ is a map, called the \emph{problem function}.
	\end{enumi}
\end{de}

\begin{de}[Arithmetic algorithm]\label{def:Algorithm}
	Let $(\Om,\Lambda,\Xi,\mathcal M)$ be a computational problem. An \emph{arithmetic algorithm} is a map $\Gamma:\Om\to\mathcal M$ such that for each $T\in\Om$ there exists a finite subset $\Lambda_\Gamma(T)\subset\Lambda$ such that
	\begin{enumi}
		\item the action of $\Gamma$ on $T$ depends only on $\{f(T)\}_{f\in\Lambda_\Gamma(T)}$,
		\item for every $S\in\Om$ with $f(T)=f(S)$ for all $f\in\Lambda_\Gamma(T)$ one has $\Lambda_\Gamma(S)=\Lambda_\Gamma(T)$,
		\item the action of $\Gamma$ on $T$ consists of performing only finitely many arithmetic operations on $\{f(T)\}_{f\in\Lambda_\Gamma(T)}$.
	\end{enumi}
\end{de}

\begin{de}[Tower of arithmetic algorithms]\label{def:Tower}
	Let $(\Om,\Lambda,\Xi,\mathcal M)$ be a computational problem. A \emph{tower of algorithms} of height $k$ for $\Xi$ is a family $\Gamma_{n_1,n_2,\dots,n_k}:\Om\to\mathcal M$ of arithmetic algorithms such that for all $T\in\Om$
	\begin{align*}
		\Xi(T) = \lim_{n_k\to\infty}\cdots\lim_{n_1\to\infty}\Gamma_{n_1,n_2,\dots,n_k}(T).
	\end{align*}
\end{de}

\begin{de}[SCI]
	A computational problem $(\Om,\Lambda,\Xi,\mathcal M)$ is said to have a \emph{Solvability Complexity Index ($\SCI$)} of $k$ if $k$ is the smallest integer for which there exists a tower of algorithms of height $k$ for $\Xi$.
	If a computational problem has solvability complexity index $k$, we write \begin{align*}
 			\SCI(\Om,\Lambda,\Xi,\mathcal M)=k.
		 \end{align*}
\end{de}

In the present article, our computational problem is made up of the following elements (to be specified more precisely in Section \ref{sec:Setting}):
	\begin{enumi}
		\item $\Om$ is a class of Schr\"odinger operators $H_q$ with potentials $q$ which have a common compact support and a uniform bound in $\mathcal C^1$,
		\item $\Lambda$ is the set of all pointwise evaluations of $q$, as well as pointwise evaluations of the Green's function associated with the Helmholtz operator $-\Delta-z^2$,
		\item $\mathcal M$ is the space $\mathrm{cl}(\C)$ of all closed subsets of $\C$ equipped with the Attouch-Wets metric (which is a generalization of the Hausdorff metric to the case of unbounded sets),
		\item $\Xi:\Om\to \mathcal M$ is the map that associates to a particular Schr\"odinger operator its set of resonances, and we denote it by $\Res(H_q)$.
	\end{enumi}

We show that for this computational problem there exists a tower of height $1$, i.e. there exists a family of algorithms $\{\Gamma_n\}_{n\in\N}$ such that $\Gamma_n(H_q)\to\Res(H_q)$ as $n\to+\infty$ for any $H_q\in\Omega$, where the convergence is in the sense of the Attouch-Wets metric \cite{beer}, generated by the following distance function:

\begin{de}[Attouch-Wets distance] Let $A,B$ be closed sets in $\C$. The  \emph{Attouch-Wets distance} between them is defined as
\begin{align*}
	d_{\text{AW}}(A,B) = \sum_{i=1}^\infty 2^{-i}\min\left\{ 1\,,\,\sup_{|x|<i}\left| \dist(x,A) - \dist(x,B) \right| \right\}.
\end{align*} 
Note that if $A,B\subset\C$ are bounded, then $d_{\text{AW}}$ is equivalent to the Hausdorff distance.
\end{de}
\begin{remark}\label{remark:Attouch-Wets}
	It can be shown  (cf. \cite[Prop. 2.8]{R19}) that a sequence of sets $A_n\subset \C$ converges to $A$ in Attouch-Wets metric, if the following two conditions are satisfied
\begin{itemize}
	\item If $\lambda_n\in A_n$ and $\lambda_n\to\lambda$, then $\lambda\in A$.
	\item If $\lambda\in A$, then there exist $\lambda_n\in A_n$ with $\lambda_n\to\lambda$.
\end{itemize}

\end{remark}

\subsection{Main Result}\label{sec:Setting}

Let $d\in\mathbb N$, fix $M>0$ and let $\Om_M$ denote the class of Schr\"odinger operators
\begin{align*}
	H_q:=-\Delta+q\quad\text{on}\quad L^2(\R^d)
\end{align*}
with $q\in \mathcal C^1(\R^d;\C)$ and $\supp(q)\Subset Q_M$, where $Q_M$ denotes the cube of edge length $M$ centered at the origin.
Moreover, for $x\in\R^d$ and $z\in\C$ let 
\begin{align}\label{eq:Fundamental_Solution}
	G(x,z):=\begin{cases}\f\i 4\left( \f{z}{2\pi |x|} \right)^{\f{d-2}{2}}H^{(1)}_{\f{d-2}{2}}\big( z|x| \big), &d\ge 2,\\
		\f{\i}{2z}e^{\i z|x|}, & d=1,
		\end{cases}
\end{align}
where $H^{(1)}_{\nu}$ denotes the Hankel function of the first kind. For $\im(z)>0$, $G(x,z)$ is the fundamental solution to the free Helmholtz operator $-\Delta-z^2$ (cf. \cite[Ch. 22]{Serov}).
We define the evaluation set $\Lambda$ to be
\begin{align}\label{eq:input_info}
	\Lambda &:=  \left\{M \right\}\cup\left\{ q\mapsto q(x)\,|\, x\in\R^d \right\} 
		\cup \left\{  G(x,z)\,|\, x\in\R^d,\,z\in\C \right\}.
\end{align}
Then the quadruple $(\Om_M,\Lambda,\Res(\cdot),\text{cl}(\C))$ poses a computational problem in the sense of Definition \ref{def:computational_problem}. The main result of the present article is the following.
\begin{theorem}\label{th:mainth}
Resonances can be computed in one limit: $\SCI(\Om_M,\Lambda,\Res(\cdot),\mathrm{cl}(\C))=1.$
\end{theorem}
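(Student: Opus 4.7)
The plan is to construct, for each $n\in\N$, an arithmetic algorithm $\Gamma_n$ that searches a finite grid in $\C$ and returns those grid points at which the smallest singular value of a finite Galerkin approximation of $I+q(-\Delta-z^2)^{-1}\chi$ falls below a prescribed threshold. Because $q$ and $\chi$ are compactly supported in $Q_M$, the operator $K_q(z):=q(-\Delta-z^2)^{-1}\chi$ is unitarily equivalent to the integral operator on $L^2(Q_M)$ with kernel $k(x,y;z)=q(x)G(x-y,z)\chi(y)$, which is compact (e.g.\ by factoring through $H^2(Q_M)$ and invoking Rellich--Kondrachov). Analytic Fredholm theory then gives
\begin{align*}
z\in\Res(H_q)\quad\Longleftrightarrow\quad s(z):=\sigma_{\min}\bigl(I+K_q(z)\bigr)=0,
\end{align*}
so the task reduces to locating the zeros of the scalar function $s$ on $\C\setminus\{0\}$.

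Partition $Q_M$ into $n^d$ cubes $\{C_j\}$ of edge $h_n=M/n$ and let $P_n$ be the orthogonal projection of $L^2(Q_M)$ onto the span of the normalized indicators $\{h_n^{-d/2}\mathbf 1_{C_j}\}$. The Galerkin matrix $\mathbf K_n(z):=P_nK_q(z)P_n$ has $n^d\times n^d$ entries
\begin{align*}
[\mathbf K_n(z)]_{ij}=h_n^{-d}\int_{C_i}\!\int_{C_j} q(x)\,G(x-y,z)\,\chi(y)\,dy\,dx,
\end{align*}
which I would compute via a product midpoint rule on a refined sub-grid, treating the near-diagonal blocks with a singularity-subtraction step employing the closed form of $G$ near zero; every numerical input then comes from finitely many elements of $\Lambda$. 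Call the resulting matrix $\widetilde{\mathbf K}_n(z)$, set $\mathcal Z_n:=\bigl\{z\in\tfrac{1}{n^2}(\Z+\i\Z):|z|\le n\bigr\}$, and define
\begin{align*}
\Gamma_n(H_q):=\bigl\{z\in\mathcal Z_n:\sigma_{\min}(I+\widetilde{\mathbf K}_n(z))<1/n\bigr\},
\end{align*}
where $\sigma_{\min}$ of the finite matrix is evaluated to tolerance $2^{-n}$ by finitely many iterations on $(I+\widetilde{\mathbf K}_n(z))^*(I+\widetilde{\mathbf K}_n(z))$. Then $\Gamma_n$ is arithmetic in the sense of Definition \ref{def:Algorithm}.

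Convergence in the Attouch--Wets metric is checked via Remark \ref{remark:Attouch-Wets}. Granted a locally-uniform operator-norm bound $\|\widetilde{\mathbf K}_n(z)-K_q(z)\|\to 0$, the Lipschitz estimate $|\sigma_{\min}(A)-\sigma_{\min}(B)|\le\|A-B\|$ forces any accumulation point $z^*$ of a sequence $z_n\in\Gamma_n(H_q)$ to satisfy $s(z^*)=0$, hence $z^*\in\Res(H_q)$; conversely, if $z^*\in\Res(H_q)$ the same uniform convergence yields $\sigma_{\min}(I+\widetilde{\mathbf K}_n(\cdot))\to s(\cdot)$ uniformly near $z^*$, and eventual density of $\mathcal Z_n$ supplies grid points $z_n\to z^*$ with $\sigma_{\min}(I+\widetilde{\mathbf K}_n(z_n))<1/n$ for large $n$. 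The main obstacle is precisely the locally-uniform operator-norm estimate: the singularity of $G$ at the origin (logarithmic for $d=2$, of order $|x|^{2-d}$ for $d\ge 3$) rules out direct midpoint quadrature on diagonal blocks, and a singularity-subtraction scheme swapping the near-diagonal part of the kernel for its explicit free-space contribution is required, the remainder then being controlled against the individual $\mathcal C^1$-norm of $q$. A secondary delicate point is calibrating the threshold $1/n$ against the rates of discretization and grid refinement so that every resonance is captured without generating spurious grid points; this will rest on a quantitative Fredholm bound describing how fast $s(z)$ vanishes near each pole of $(I+K_q(z))^{-1}$.
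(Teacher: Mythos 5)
Your architecture — piecewise-constant Galerkin discretization of $K(z)$ on $Q_M$, a shrinking lattice of $z$-samples, and a smallest-singular-value threshold — matches the paper's, but the two ingredients you flag as ``delicate points to be calibrated'' are not secondary: without them the inclusion half of the Attouch--Wets convergence does not close, and they constitute most of the actual proof. The central gap is your threshold $1/n$. Near a resonance $z^*$ you need a lattice point $z_n$ with $\sigma_{\min}(I+\widetilde{\mathbf K}_n(z_n))<1/n$, and the Lipschitz estimate forces $s(z_n)+a_n<1/n$, where $a_n$ is the operator-norm discretization error — in particular $a_n<1/n$. But $a_n$ is dominated by the Galerkin projection error $\|K-P_nKP_n\|$, which no quadrature sub-grid refinement can beat and which the mesh size $h_n=M/n$ pins at a rate like $n^{-2/(d+1)}$ (this is exactly what the paper's proof of \eqref{eq:K-PKP} delivers); that already exceeds $1/n$ for $d\ge2$. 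The fix, engineered into Lemmas \ref{lemma:pollution} and \ref{lemma:inclusion}, is to threshold at $2\sqrt{a_n}$, which is asymptotically much larger than $a_n$. Even then, ``eventual density of $\mathcal Z_n$'' does not by itself produce the needed lattice point: you must quantify how fast $s(z)$ vanishes near a pole of unknown finite order $\nu$, which is precisely the Laurent estimate \eqref{eq:Laurent}. The paper uses an exponentially fine lattice $e^{-1/a_n}(\Z+\i\Z)$ so that the estimate works uniformly in $\nu$; your polynomial lattice $n^{-2}(\Z+\i\Z)$ could in principle be salvaged (since $\nu\ge1$ always), but only once that Laurent estimate and a corrected threshold are in hand.

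Two further problems. Computing $\sigma_{\min}$ ``to tolerance $2^{-n}$ by finitely many iterations'' is not an arithmetic algorithm in the sense of Definition \ref{def:Algorithm}: the iteration count needed for a prescribed tolerance depends on a spectral gap you cannot bound a priori. The paper instead \emph{decides} whether $\|(I+K_n(z))^{-1}\|$ exceeds the threshold exactly in finitely many operations via \cite[Prop.~10.1]{AHS}. And your quadrature-with-singularity-subtraction scheme for the Galerkin entries requires cell integrals of the singular part of $G$, which are not in the evaluation set $\Lambda$ and need not be finite arithmetic expressions; the paper sidesteps this by taking a pure collocation operator $K_n$ with matrix entries $n^{-d}K(i,j)$ (diagonal zeroed for $d\ge2$), computable directly from pointwise values of $q$ and $G$, and then bounding $\|P_nK|_{\h_n}-K_n\|$ separately as \eqref{eq:PnK|-Kn}.
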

We  prove this theorem  by explicitly constructing an algorithm which computes the set of resonances in one limit. This algorithm can be implemented numerically; some numerical experiments are provided  in Section \ref{sec:Numerics}.
\begin{remark}
\begin{enumi}
	\item We note that there exist examples of computational spectral problems for which $\SCI\geq2$, even in the selfadjoint case  (cf. \cite[Th. 6.5]{AHS}). Whenever $\SCI\geq2$ it is impossible  to have error control (this can be shown using a straightforward diagonal-type argument). Identifying situations in which $\SCI=1$ is therefore of particular interest.
	\item If we drop the restriction $\supp(q)\Subset Q_M$ from the definition of $\Omega$, i.e. if the size of $\supp(q)$ is not assumed to be known \emph{a priori}, then Theorem \ref{th:mainth} implies $\SCI\leq 2$, but it is no longer clear that  $\SCI =1$. Indeed, let us provide a sketch of the proof:
		
		\emph{Sketch of proof:}  Choose a sequence $0\leq M_k\to+\infty$ and for each fixed $k$ let $\{\Gamma_{k,n}\}_{n\in\N}$ be a sequence of algorithms as in Theorem \ref{th:mainth}, which computes the resonances of $\Om_{M_k}$ in one limit. Given a compactly supported potential function $q$, whose support is not known, choose a smooth cutoff function $\rho_k$ with $Q_{M_k-1}\subset\supp(\rho_k)\Subset Q_{M_k}$. Then from Theorem \ref{th:mainth} we know that $\{\Gamma_{k,n}\}_{n\in\N}$ computes the set of resonances of $H_{\rho_k q}$ in one limit. As soon as $\supp(q)\Subset Q_{M_k-1}$, one has $\rho_k q\equiv q$ and the sequence $\{\lim_{n\to\infty}\Gamma_{nk}(H_{\rho_kq})\}_{k\in\N}$ will be constant in $k$, and hence convergent. This proves that $\lim_{k\to\infty}\lim_{n\to\infty}\Gamma_{k,n}(q)=\Res(H_q)$, that is, the set of resonances can be computed in two limits.
\end{enumi}
\end{remark}
The proof of Theorem \ref{th:mainth} is divided into several steps. First, we obtain quantitative resolvent norm estimates for the operator $K(z):=q(H_q-z^2)^{-1}\chi$ from Definition \ref{def:res}. These are then used to bound the error between $K(z)$ itself and a discretized version $K_n(z)$, obtained by replacing the potential $q$ by a piecewise constant approximation. Finally, the poles of $(I+K(z))^{-1}$ are identified through a thresholding of the discretized operator function $(I+K_n(z))^{-1}$.

\subsubsection*{Organization of the paper}

Section \ref{sec:Analytic_Continuation} contains a short discussion of Definition \ref{def:res} and meromorphic continuation. In Section \ref{sec:abstract} we prove some estimates for convergence of finite-dimensional approximations of linear operators, which are then used in Section \ref{sec:algorithm} to construct an explicit algorithm which computes resonances in one limit, thereby proving Theorem \ref{th:mainth}. Section  \ref{sec:Numerics} is dedicated to numerical experiments. In Appendix \ref{app:greens} we review some properties of the Green's function $G(x,z)$ introduced in \eqref{eq:Fundamental_Solution}.

\section{Analytic Continuation}\label{sec:Analytic_Continuation}
We  use this section for a more detailed discussion of Definition \ref{def:res} and to fix some notations and conventions.
For the sake of self containedness, we  prove the existence of $z\mapsto (I+q(-\Delta-z^2)^{-1}\chi)^{-1}$ as a meromorphic operator-valued function on the domain
\begin{align*}
	\C^{\mathrm{ext}}:=\begin{cases}
		\C & \text{if } d \text{ is odd},\\
		\text{logarithmic cover of }\C & \text{if } d \text{ is even}.
	\end{cases}
\end{align*}
This result follows from the classical Analytic Fredholm Theorem (cf. e.g. \cite[Sec. VI.5]{RS})
\begin{theorem}[Analytic Fredholm Theorem]
	Let $D\subset \C$ be open and connected and let $F:D\to L(\h)$ be an analytic operator-valued function such that $F(z)$ is compact for all $z\in D$. Then, either
	\begin{enumi}
		\item $(I+F(z))^{-1}$ exists for no $z\in D$, or
		\item $(I+F(z))^{-1}$ exists for all $z\in D\setminus S$, where $S$ is a discrete subset of $D$. In this case, $z\mapsto (I+F(z))^{-1}$ is meromorphic in $D$, analytic in $D\setminus S$, the residues at the poles are finite rank operators, and if $z\in S$ then $\ker(I+F(z))\neq\{0\}$.
	\end{enumi}
\end{theorem}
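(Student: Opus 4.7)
The plan is to reduce the problem locally to a finite-dimensional determinant condition and then globalize via connectedness. I would fix an arbitrary $z_0 \in D$, use compactness of $F(z_0)$ to choose a finite-rank operator $F_0$ with $\|F(z_0) - F_0\| < 1/2$, and exploit norm-continuity of $z \mapsto F(z)$ to pick an open disc $U \subset D$ around $z_0$ on which $\|A(z)\| < 1$, where $A(z) := F(z) - F_0$. Then $I + A(z)$ is invertible on $U$ with analytic inverse (Neumann series), and the factorization
\[
I + F(z) = (I + A(z))\bigl(I + (I+A(z))^{-1} F_0\bigr)
\]
shows that $I+F(z)$ is invertible on $\h$ if and only if $I + G(z)$ is, where $G(z) := (I+A(z))^{-1} F_0$ is an analytic family of finite-rank operators with range contained in the fixed finite-dimensional subspace $V := \Ran(F_0)$.

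Since $G(z)\h \subset V$, any $v \in \ker(I + G(z))$ satisfies $v = -G(z)v \in V$, and solving $(I+G(z))v = u$ for general $u$ forces $v \in u + V$, reducing the equation to $(I + G(z)|_V)v' = -G(z)u$ on $V$. Hence $I+G(z)$ is invertible on $\h$ iff $(I+G(z))|_V$ is invertible on $V$, and
\[
(I + G(z))^{-1} = I - (I + G(z)|_V)^{-1} G(z).
\]
The determinant $d(z) := \det\bigl((I + G(z))|_V\bigr)$ is a scalar analytic function on $U$ whose zero set $Z_U$ equals the set of singular points of $I + F(z)$ in $U$. By the identity theorem for holomorphic functions, either $d \equiv 0$ on $U$, or $Z_U$ is a discrete subset of $U$.

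For the global dichotomy I would define $D_1 := \{z \in D : d \equiv 0 \text{ in some neighborhood of } z\}$ and $D_2 := D \setminus D_1$. Both sets are open, so by connectedness of $D$ one of them must be empty: if $D_1 = D$ then alternative (i) holds, while if $D_2 = D$ then the singular set $S \subset D$ is discrete and alternative (ii) is in force. On each local $U$, Cramer's rule yields that $(I + G(z)|_V)^{-1}$ is a meromorphic $V$-valued matrix function with poles in $Z_U$, so the displayed identity renders $(I + G(z))^{-1}$, and therefore $(I + F(z))^{-1} = (I + G(z))^{-1}(I + A(z))^{-1}$, meromorphic on $U$. The residues have range in $V$ (because every pole contribution factors through $(I+G(z)|_V)^{-1} G(z)$), hence are of rank at most $\dim V < \infty$. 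Finally, at each $z_0 \in S$ the finite-dimensional operator $(I + G(z_0))|_V$ is singular, so it has a nontrivial kernel vector $v' \in V$; by the reduction above, $v'$ already lies in $\ker(I + G(z_0))$ on all of $\h$, and tracking it back through the factorization produces a nontrivial element of $\ker(I + F(z_0))$.

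The main technical delicacy is the careful handling of the meromorphic inversion formula to ensure that the residues are genuinely finite-rank; this is secured by the observation that every pole contribution factors through the fixed finite-dimensional subspace $V$. The remainder of the argument is a routine combination of the Neumann series, finite-dimensional linear algebra and the identity theorem, glued together by a standard connectedness argument on $D$.
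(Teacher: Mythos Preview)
The paper does not prove this theorem; it is quoted as the classical Analytic Fredholm Theorem with a reference to \cite[Sec.~VI.5]{RS}. Your outline follows the standard Reed--Simon strategy and is essentially sound, but there is one concrete slip that breaks the argument as written.

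You define $G(z) := (I+A(z))^{-1}F_0$ and then assert $G(z)\h \subset V := \Ran(F_0)$. This is false in general: for $h\in\h$ one has $F_0h\in V$, but $(I+A(z))^{-1}$ need not preserve $V$, so $\Ran G(z) = (I+A(z))^{-1}V$ is a finite-dimensional subspace that \emph{depends on $z$}. Your subsequent reduction (``$v\in u+V$'', the restriction $(I+G(z))|_V$, and the scalar determinant $d(z)$ on a fixed space) relies on the range being contained in a fixed $V$, so the chain of implications does not go through as stated.

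The fix is immediate: use the other factorization
\[
I + F(z) = \bigl(I + F_0(I+A(z))^{-1}\bigr)(I+A(z)),
\]
and set $\widetilde G(z) := F_0(I+A(z))^{-1}$. Now $\Ran \widetilde G(z)\subset \Ran F_0 = V$ for every $z\in U$, and all of your finite-dimensional reductions (kernel in $V$, invertibility equivalent to that of $(I+\widetilde G(z))|_V$, analytic determinant $d(z)$, Cramer's rule giving finite-rank residues) go through verbatim. Alternatively, keep your $G(z)$ but express $F_0=\sum_{i=1}^n \langle\cdot,\phi_i\rangle\psi_i$ and reduce invertibility of $I+G(z)$ to that of the $n\times n$ matrix $M(z)_{ij}=\delta_{ij}+\langle (I+A(z))^{-1}\psi_j,\phi_i\rangle$, whose determinant is the desired analytic function. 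Either repair restores a correct proof; the connectedness/open-closed dichotomy and the residue argument are fine.
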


Next, recall that $Q_M$ denotes the cube of edge length $M$ in $\R^d$ centered at the origin. Let $\chi:=\chi_{Q_M}$ be the indicator function of $Q_M$. Note that the operator-valued function $z\mapsto q(-\Delta-z^2)^{-1}\chi$ is an analytic function on $\C^{\mathrm{ext}}\setminus\{0\}$. This follows from the explicit representation of the free fundamental solution \eqref{eq:Fundamental_Solution} (cf. Remark \ref{remark:even_odd}).
 
\begin{lemma}
	The function $\C^+\ni z\mapsto\big(I+q(-\Delta-z^2)^{-1}\chi\big)^{-1}$ has a meromorphic continuation to $\C^{\mathrm{ext}}$. 
	Moreover, the residues at the poles are finite rank operators.
\end{lemma}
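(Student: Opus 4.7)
The strategy is to verify the hypotheses of the Analytic Fredholm Theorem just quoted for the family $F(z) := q(-\Delta-z^2)^{-1}\chi$. The first task is to extend $F$ from $\C^+$ to all of $\C^{\mathrm{ext}}\setminus\{0\}$. I would do this by rewriting $F(z)$ as the integral operator on $L^2(\R^d)$ with kernel $K_z(x,y) := q(x)\,G(x-y,z)\,\chi(y)$, which is compactly supported in $Q_M\times Q_M$ and coincides with the original convolution representation when $z\in\C^+$. Analyticity of $z\mapsto F(z)$ in operator norm on $\C^{\mathrm{ext}}\setminus\{0\}$ is then inherited from the analyticity of $z\mapsto G(x,z)$ on the logarithmic cover of $\C\setminus\{0\}$ --- this is exactly the reason the logarithmic cover enters the definition of $\C^{\mathrm{ext}}$ in even dimensions, where the Hankel functions of integer order carry a logarithmic branch point at $0$.

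The next step is compactness of $F(z)$ for each fixed $z$. I would argue via elliptic regularity: for $g\in L^2(\R^d)$ supported in $Q_M$, the distribution $G(\cdot,z)*g$ is a distributional solution of $(-\Delta-z^2)u=g$ and therefore lies in $H^2_{\mathrm{loc}}(\R^d)$. Multiplication by the compactly supported $\mathcal C^1$ function $q$ then lands in $H^1(Q_M)$, and Rellich--Kondrachov provides a compact embedding into $L^2(Q_M)\subset L^2(\R^d)$. Hence $F(z)$ is compact, being the composition of a bounded map into $H^1(Q_M)$ with a compact embedding.

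Finally, to exclude alternative (i) of the Analytic Fredholm Theorem I must exhibit at least one $z$ at which $I+F(z)$ is invertible. Taking $z=it$ for large $t>0$, the spectral theorem gives $\|(-\Delta-z^2)^{-1}\|_{L^2\to L^2}\le t^{-2}$, and since $q\in\mathcal C^1$ has compact support, $\|F(it)\|\le t^{-2}\|q\|_\infty\to 0$; thus $I+F(it)$ is invertible by Neumann series for $t$ large. Because $\C^{\mathrm{ext}}\setminus\{0\}$ is connected, the theorem then delivers a meromorphic inverse on the whole of $\C^{\mathrm{ext}}\setminus\{0\}$, with the finite-rank residue property already asserted as part of its conclusion. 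The step I expect to be most delicate is the application on the logarithmic cover for even $d$: the Analytic Fredholm Theorem as quoted only refers to open subsets of $\C$, so one must invoke its straightforward extension to connected complex manifolds, or reduce to the quoted form by working sheet-by-sheet and patching. Neither presents a genuine obstacle, since analyticity and compactness of $F(z)$ are both local in $z$, but this is the only place where dimension parity plays a role in the argument.
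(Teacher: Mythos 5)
Your proposal is correct and follows the paper's overall strategy: verify the hypotheses of the Analytic Fredholm Theorem for $F(z)=q(-\Delta-z^2)^{-1}\chi$, with analyticity of $z\mapsto F(z)$ on $\C^{\mathrm{ext}}\setminus\{0\}$ taken from the explicit Hankel-function representation of $G$ (Remark \ref{remark:even_odd}), invertibility at some point obtained from a Neumann series for $\imag(z)$ large, and the finite-rank residue statement read off from the theorem's conclusion. Your remark that the theorem needs its obvious extension to the logarithmic cover in even dimensions is exactly the point the paper defers to Remark \ref{remark:even_odd}.

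The one place you genuinely diverge from the paper is the compactness argument. You go through elliptic regularity: $(-\Delta-z^2)^{-1}(\chi g)$ is a distributional solution of the Helmholtz equation, hence in $H^2_{\mathrm{loc}}$, so multiplication by the compactly supported $\mathcal{C}^1$ potential $q$ lands in $H^1(Q_M)$, and Rellich--Kondrachov furnishes the compact embedding into $L^2$. The paper instead invokes the Fr\'echet--Kolmogorov theorem directly on the integral operator with kernel $q(x)G(x-y,z)\chi(y)$. Both arguments are standard and correct. Your route is more structural and makes the role of the differentiability of $q$ visible, though it uses more regularity of $q$ than strictly necessary --- a minor rearrangement (take the compact embedding $H^2(Q_M)\hookrightarrow L^2(Q_M)$ \emph{before} multiplying by $q$) would give compactness for merely bounded $q$, matching the generality of Fr\'echet--Kolmogorov. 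The Fr\'echet--Kolmogorov route works directly with the kernel and avoids the Sobolev machinery, at the cost of having to verify $L^2$-equicontinuity of the image by hand from the pointwise bounds on $G$.
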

\begin{proof}
The operator $q(-\Delta-z^2)^{-1}\chi$ is compact by the Fr\'echet-Kolmogorov theorem and the inverse $\big(I+q(-\Delta-z^2)^{-1}\chi\big)^{-1}$ exists for $\im(z)>0$ large enough, by the Neumann series. Hence, the claim follows from the analytic Fredholm theorem, together with Remark \ref{remark:even_odd} in the appendix.
\end{proof}

The above observations lead us to study the spectrum of the compact operator
\begin{align}\label{eq:free_resolvent}
	K(z):=q(-\Delta-z^2)^{-1}\chi, \qquad z\in\mathbb \C^{\mathrm{ext}}.
\end{align}
Since the integral kernel for the free resolvent is given explicitly by \eqref{eq:Fundamental_Solution} as an analytic function of $z\in\C^{\text{ext}}\setminus\{0\}$, we have an explicit representation of \eqref{eq:free_resolvent} as an integral operator on $L^2(\R^d)$:
\begin{align}\label{eq:integral_formula}
	\big(q(-\Delta-z^2)^{-1}\chi\, f\big)(x) = q(x)\int_{\R^d} G(x-y,z)\chi(y)f(y)\,dy, \qquad z\in \C^{\mathrm{ext}}\setminus\{0\}.
\end{align}

\section{Abstract Error Estimates}\label{sec:abstract}
We recall that  the resonances of $H_q=-\Delta+q$ are defined to be  the poles of $\C^{\mathrm{ext}}\ni z\mapsto\big(I+K(z)\big)^{-1}$ where $K(z)=q(-\Delta-z^2)^{-1}\chi$ is a compact operator. In this section we prove general, abstract, estimates for approximations of  families of linear operators. These are largely independent of the rest of this paper and will be applied in the proof of Theorem \ref{th:mainth}. Abusing notation, our generic abstract analytic operator family is denoted $K(z)$.

Let $\h$ be a separable Hilbert space and denote by $L(\h)$ the space of bounded operators on $\h$. Let $\h_n\subset \h$ be a finite-dimensional subspace, $P_n:\h\to\h_n$ the orthogonal projection  and $K:\C^{\mathrm{ext}}\to L(\h)$ continuous in operator norm.  Moreover, let $K_n:\C^{\mathrm{ext}}\to L(\h_n)$ be analytic for every $n\in\N$. Assume that for any compact subset $B\subset \C^\mathrm{ext}$ there exist a sequence $a_n\downarrow0$ and a constant $C>0$ such that
\begin{align}
	\|K(z)-K_n(z)P_n\|_{L(\h)} &\leq Ca_n, \label{eq:K-KnPn}\\
	\left\|P_nK(z)|_{\h_n}-K_n(z)\right\|_{L(\h_n)} &\leq Ca_n, \label{eq:PnK|-Kn}\\
	\|K(z)-P_nK(z)P_n\|_{L(\h)} &\leq Ca_n,\label{eq:K-PKP}
\end{align}
for all $z\in B$. 
\subsection{Error Estimates}
\begin{lemma}\label{lemma:pollution}
	If $z\in\C^\mathrm{ext}$ is such that $-1\notin\sigma(K(z))$, then 
	\begin{align*}
		\big(1-Ca_n\|(I+K(z))^{-1}\|_{L(\h)}\big)\left\|(I+K_n(z))^{-1}\right\|_{L(\h_n)} \leq \left\|(I+K(z))^{-1}\right\|_{L(\h)},
	\end{align*}
	where we use the convention that $\|(I+K_n(z))^{-1}\|_{L(\h)}=+\infty$ if $-1\in\sigma(K_n(z))$.
\end{lemma}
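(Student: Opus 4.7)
The plan is to turn hypothesis \eqref{eq:K-KnPn} into a bound comparing $(I+K(z))f$ with $(I+K_n(z))f$ on vectors $f\in\h_n$, and then invoke the assumed boundedness of $(I+K(z))^{-1}$. The decisive observation is that for $f\in\h_n$ one has $P_n f = f$, so $K_n(z) P_n f = K_n(z) f$; combining this with \eqref{eq:K-KnPn} gives the pointwise estimate
\begin{equation*}
\|K(z)f - K_n(z)f\|_{L(\h)} \leq C a_n \|f\|_{\h}, \qquad f\in\h_n.
\end{equation*}
Note that only \eqref{eq:K-KnPn} is used in this argument; the other assumptions are presumably needed elsewhere in the paper.

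Starting from the trivial identity
\begin{equation*}
(I+K(z))f \;=\; (I+K_n(z))f \;+\; \bigl(K(z)-K_n(z)\bigr)f, \qquad f\in\h_n,
\end{equation*}
I would apply $(I+K(z))^{-1}$, whose existence is guaranteed by the hypothesis $-1\notin\sigma(K(z))$, and take norms in $\h$. Using the pointwise estimate above and the fact that $(I+K_n(z))f\in\h_n$ has the same $\h$-norm as its $\h_n$-norm, this yields
\begin{equation*}
\|f\|_{\h} \;\leq\; \|(I+K(z))^{-1}\|_{L(\h)}\bigl(\|(I+K_n(z))f\|_{\h_n} + C a_n\|f\|_{\h}\bigr).
\end{equation*}
Rearranging isolates the desired factor:
\begin{equation*}
\bigl(1 - Ca_n\|(I+K(z))^{-1}\|_{L(\h)}\bigr)\|f\|_{\h_n} \;\leq\; \|(I+K(z))^{-1}\|_{L(\h)}\,\|(I+K_n(z))f\|_{\h_n}.
\end{equation*}

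Finally, I would take the supremum over $f\in\h_n$ with $(I+K_n(z))f$ of unit norm. If the factor $1-Ca_n\|(I+K(z))^{-1}\|_{L(\h)}$ is strictly positive, the inequality above shows that $I+K_n(z)$ is injective on the finite-dimensional space $\h_n$ --- hence boundedly invertible --- and the supremum yields the claim. If the factor is non-positive, the inequality is trivially satisfied under the stated convention $\|(I+K_n(z))^{-1}\|_{L(\h_n)} = +\infty$ when $-1\in\sigma(K_n(z))$, since the right-hand side is non-negative. The main (modest) subtlety is precisely the bookkeeping in this last case and the fact that the operator $(I+K(z))^{-1}$ is used as a bounded map on the full space $\h$ while the conclusion is a statement about the finite-dimensional restriction; the device of testing only on $f\in\h_n$ is what bridges the two.
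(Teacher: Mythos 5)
Your proof is correct and takes a genuinely different, more streamlined route than the paper's. The paper works with $I+K_n(z)P_n$ on all of $\h$: it factors
$I+K_n(z)P_n = (I+K(z))\bigl[I+(I+K(z))^{-1}(K_n(z)P_n-K(z))\bigr]$,
inverts the bracket by summing a Neumann series, obtains a bound on $\|(I+K_n(z)P_n)^{-1}\|_{L(\h)}$, and then transfers this to the restricted norm $\|(I+K_n(z))^{-1}\|_{L(\h_n)}$ via the separate Claim~\ref{claim:replace-norm}. You instead test directly on $f\in\h_n$, where $P_nf=f$ and $\|f\|_{\h}=\|f\|_{\h_n}$, and use the additive split $(I+K(z))f=(I+K_n(z))f+(K(z)-K_n(z))f$ together with a single triangle inequality; the elementary observation that injectivity of $I+K_n(z)$ on the finite-dimensional $\h_n$ already gives invertibility then replaces the Neumann-series argument, and working intrinsically on $\h_n$ makes Claim~\ref{claim:replace-norm} unnecessary. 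Both proofs use only hypothesis \eqref{eq:K-KnPn} and handle the case $1-Ca_n\|(I+K(z))^{-1}\|_{L(\h)}\le 0$ as trivially true under the stated convention. The paper's method additionally produces a bound on $(I+K_n(z)P_n)^{-1}$ over the whole space $\h$, which is not needed for this lemma but mirrors the mechanics used again in Lemma~\ref{lemma:inclusion}; your version is shorter and avoids that detour.
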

\begin{proof}
	Whenever the left hand side is non-positive the assertion is trivially true, so we may assume w.l.o.g. that $1-Ca_n\|(I+K(z))^{-1}\|_{L(\h)}>0$.
	In this case, the assertion follows by a Neumann series argument, as follows. We have
	\begin{equation}\label{eq:factor}
	\begin{aligned}
		I+K_n(z)P_n &= I+K(z)+(K_n(z)P_n-K(z))\\
		&= (I+K(z))\left[ I+(I+K(z))^{-1}(K_n(z)P_n-K(z)) \right]
	\end{aligned}
	\end{equation}
	Because $Ca_n<\f{1}{\|(I+K(z))^{-1}\|}$, the second factor in \eqref{eq:factor} is invertible by the Neumann series and
	\begin{align*}
		\left[ I+(I+K(z))^{-1}(K_n(z)P_n-K(z)) \right]^{-1} = \sum_{j=0}^\infty \Big((I+K(z))^{-1}(K_n(z)P_n-K(z))\Big)^j.
	\end{align*}
	Hence,
	\begin{align*}
		\left\|(I+K_n(z)P_n)^{-1}\right\|_{L(\h)} &\leq \Bigg\| \sum_{j=0}^\infty \Big((I+K(z))^{-1}(K_n(z)-K(z))\Big)^j \Bigg\|_{L(\h)} \left\|(I+K(z))^{-1}\right\|_{L(\h)} \\
		&\leq \sum_{j=0}^\infty \left\|(I+K(z))^{-1}\right\|_{L(\h)}^{j+1} \left\|K_n(z)P_n-K(z)\right\|_{L(\h)}^j\\
		&\leq \sum_{j=0}^\infty \left\|(I+K(z))^{-1}\right\|_{L(\h)}^{j+1} (Ca_n)^j\\
		&= \left\|(I+K(z))^{-1}\right\|_{L(\h)} \sum_{j=0}^\infty \left\|(I+K(z))^{-1}\right\|_{L(\h)}^{j} (Ca_n)^j\\
		&= \f{\|(I+K(z))^{-1}\|_{L(\h)}}{1-\left\|(I+K(z))^{-1}\right\|_{L(\h)} Ca_n}
	\end{align*}	
for any $n\in\N$.	It remains to replace the $L(\h)$ norm on the left hand side by the $L(\h_n)$ norm. This follows from Claim \ref{claim:replace-norm}. 	This completes the proof.
\end{proof}
\begin{cl}\label{claim:replace-norm}
	 We have $\|(I+K_n(z))^{-1}\|_{L(\h_n)}\leq \|(I+K_n(z)P_n)^{-1}\|_{L(\h)}$ for all $z$ for which both operators are boundedly invertible.
\end{cl}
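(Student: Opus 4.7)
The plan is to observe that, because $K_n(z)$ takes values in $L(\h_n)$, the bounded operator $K_n(z)P_n$ on $\h$ has range inside $\h_n$ and vanishes on $\h_n^\perp$. With respect to the orthogonal decomposition $\h = \h_n \oplus \h_n^\perp$, this forces $I + K_n(z)P_n$ to be block-diagonal, with $I + K_n(z)$ acting on the $\h_n$-block and the identity acting on the $\h_n^\perp$-block.

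More concretely, the first step is simply to rewrite, for every $x\in\h$,
\begin{equation*}
(I + K_n(z)P_n)\,x \;=\; (I + K_n(z))P_n x \;+\; (I - P_n)x,
\end{equation*}
which splits into the $\h_n$-part $(I+K_n(z))P_n x\in\h_n$ and the $\h_n^\perp$-part $(I-P_n)x\in\h_n^\perp$. The second step is to note that, under the assumption that both $I + K_n(z)$ on $\h_n$ and $I + K_n(z)P_n$ on $\h$ are boundedly invertible (these two conditions are in fact equivalent precisely because of the block structure), the inverse on $\h$ is again block-diagonal with blocks $(I+K_n(z))^{-1}$ and $I$. The final step is to use that the operator norm of a block-diagonal operator on an orthogonal direct sum equals the maximum of the norms of its blocks, so that
\begin{equation*}
\|(I+K_n(z)P_n)^{-1}\|_{L(\h)} \;=\; \max\bigl\{\|(I+K_n(z))^{-1}\|_{L(\h_n)},\,1\bigr\} \;\geq\; \|(I+K_n(z))^{-1}\|_{L(\h_n)},
\end{equation*}
which is the claim. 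Equivalently, one can simply restrict the supremum defining $\|(I+K_n(z)P_n)^{-1}\|_{L(\h)}$ to unit vectors in $\h_n$, on which the operator coincides with $(I+K_n(z))^{-1}$.

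There is essentially no obstacle here: the result is a structural observation about how $P_n$ truncates the operator, and the only point requiring a moment of care is the equivalence between bounded invertibility of the two operators (again immediate from the block-diagonal form), so that the inequality is meaningfully stated under the hypothesis of the claim.
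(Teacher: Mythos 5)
Your proof is correct and rests on the same structural observation as the paper's. The paper's proof of this claim is slightly more minimal: it shows $(I+K_nP_n)^{-1}x = (I+K_n)^{-1}x$ for $x\in\h_n$ by uniqueness of the solution $u\in\h_n$ to $(I+K_n)u=x$, and then simply restricts the supremum defining $\|(I+K_nP_n)^{-1}\|_{L(\h)}$ to unit vectors in $\h_n$ — precisely the ``equivalently'' remark you make at the end. Your primary route makes the block-diagonal decomposition explicit and yields the sharper identity $\|(I+K_nP_n)^{-1}\|_{L(\h)}=\max\{\|(I+K_n)^{-1}\|_{L(\h_n)},1\}$; interestingly, the paper does use exactly this block-diagonal formula a few lines later in the proof of Lemma \ref{lemma:convergence} (for $I+P_nK(z_n)P_n$), so your version unifies the two. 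Your parenthetical remark that bounded invertibility of the two operators is equivalent is also correct and is a small extra observation the paper does not spell out.
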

	\begin{proof}
	For $x\in\h_n$ we have $(I+K_nP_n)^{-1}x = (I+K_n)^{-1}x$, because if $u\in\h_n$ solves $(I+K_n)u=x$, then $(I+K_nP_n)u=x$ and by invertibility it follows that $u=(I+K_nP_n)^{-1}x$. We conclude that 
		\begin{align*}
			\sup_{x\in\h_n,\|x\|=1} \|(I+K_nP_n)^{-1}x\|_{\h} = \sup_{x\in\h_n,\|x\|=1} \|(I+K_n)^{-1}x\|_{\h_n}
		\end{align*}
		and therefore
		\begin{align*}
			\sup_{x\in\h,\|x\|=1} \|(I+K_nP_n)^{-1}x\|_{\h} \ge \sup_{x\in\h_n,\|x\|=1} \|(I+K_n)^{-1}x\|_{\h_n}.
		\end{align*}
\end{proof}

\begin{lemma}\label{lemma:inclusion}
	If $z\in\mathbb C^\mathrm{ext}$ is such that either $-1\in\sigma(K(z))$ or $\left\|(I+K(z))^{-1}\right\|_{L(\h)}\ge \f{1}{Ca_n}$, then either $-1\in\sigma(P_nK(z)P_n)$ or
	\begin{equation*}\label{eq:resolvent>1/3a}
		\left\|(I+P_nK(z)P_n)^{-1}\right\|_{L(\h)}\ge \f{1}{2Ca_n}.
	\end{equation*}
\end{lemma}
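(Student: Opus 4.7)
The plan is to prove the contrapositive via a Neumann series perturbation argument, using the norm estimate \eqref{eq:K-PKP} that controls how well $P_nK(z)P_n$ approximates $K(z)$.

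Concretely, I would assume for contradiction that \emph{both} conditions in the conclusion fail, i.e.\ that $-1\notin\sigma(P_nK(z)P_n)$ \emph{and} $\|(I+P_nK(z)P_n)^{-1}\|_{L(\h)} < \frac{1}{2Ca_n}$, and derive a contradiction with the hypothesis. The key identity, in analogy with \eqref{eq:factor} from the proof of Lemma \ref{lemma:pollution}, is
\begin{align*}
I+K(z) \;&=\; I + P_nK(z)P_n + \bigl(K(z)-P_nK(z)P_n\bigr)\\
&=\; \bigl(I+P_nK(z)P_n\bigr)\Bigl[\,I + \bigl(I+P_nK(z)P_n\bigr)^{-1}\bigl(K(z)-P_nK(z)P_n\bigr)\Bigr].
\end{align*}
By \eqref{eq:K-PKP}, the perturbation inside the brackets has norm strictly less than $\frac{1}{2Ca_n}\cdot Ca_n = \frac{1}{2}$, so the bracketed factor is invertible by Neumann series, with inverse norm strictly less than $\frac{1}{1-1/2}=2$.

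Combining the two factors, $(I+K(z))$ is invertible (so $-1\notin\sigma(K(z))$) and
\begin{align*}
\left\|(I+K(z))^{-1}\right\|_{L(\h)} \;<\; 2\cdot\frac{1}{2Ca_n} \;=\; \frac{1}{Ca_n}.
\end{align*}
This simultaneously contradicts both alternatives in the hypothesis: it rules out $-1\in\sigma(K(z))$, and it forces $\|(I+K(z))^{-1}\|_{L(\h)}<\frac{1}{Ca_n}$, contradicting $\|(I+K(z))^{-1}\|_{L(\h)}\ge\frac{1}{Ca_n}$.

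I do not expect a serious obstacle here — the estimate \eqref{eq:K-PKP} is exactly tailored to this kind of perturbation argument, and unlike Lemma \ref{lemma:pollution} we do not need an auxiliary step like Claim \ref{claim:replace-norm} because $P_nK(z)P_n$ already lives on all of $\h$, so the operator norm $\|\cdot\|_{L(\h)}$ is the natural one throughout. The only point to be careful about is tracking the strict-versus-non-strict inequality so that the final contradiction is clean; this is why the factor $\frac{1}{2}$ (rather than $1$) appears in the threshold $\frac{1}{2Ca_n}$, giving the necessary slack in the Neumann series.
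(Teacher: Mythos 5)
Your proof is correct and rests on the same key factorization $I+K(z) = (I+P_nK(z)P_n)\bigl[I + (I+P_nK(z)P_n)^{-1}(K(z)-P_nK(z)P_n)\bigr]$ and the Neumann series estimate driven by \eqref{eq:K-PKP} that the paper uses. The only difference is organizational: the paper treats the two disjuncts of the hypothesis as separate cases (the first by the very contradiction you give, the second by re-deriving a Lemma \ref{lemma:pollution}-type inequality and rearranging), whereas your single contrapositive argument disposes of both disjuncts at once, which is somewhat cleaner.
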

\begin{proof}
	If $-1\in\sigma(K(z))$, then unless $-1\in\sigma(P_nK(z)P_n)$, we have 
	\begin{equation*}\label{eq:factor2}
	\begin{aligned}
		I+K(z) &= I+P_nK(z)P_n+(K(z)-P_nK(z)P_n)\\
		&= (I+P_nK(z)P_n)\left[ I+(I+P_nK(z)P_n)^{-1}(K(z)-P_nK(z)P_n) \right]
	\end{aligned}
	\end{equation*}
	We now argue by contradiction. If we had $\|(I+P_nK(z)P_n)^{-1}\|_{L(\h)}<\f{1}{2Ca_n}$, then we would have $\|(I+P_nK(z)P_n)^{-1}(K(z)-P_nK(z)P_n)\|_{L(\h)}<1$ and $I+K(z)$ would be invertible by the Neumann series contradicting our assumption that $-1\in\sigma(K(z))$. Thus we must have $\|(I+P_nK(z)P_n)^{-1}\|_{L(\h)} \geq \f{1}{2Ca_n}$.
	
	Now let us turn to the case where $-1\notin\sigma(K(z))$ and $\left\|(I+K(z))^{-1}\right\|_{L(\h)}\ge \f{1}{Ca_n}$. The same calculation as in the proof of Lemma \ref{lemma:pollution} shows that 
	\begin{align*}
		\left(1-Ca_n\left\|(I+P_nK(z)P_n)^{-1}\right\|_{L(\h)}\right)\left\| (I+K(z))^{-1} \right\|_{L(\h)} \leq \left\|(I+P_nK(z)P_n)^{-1}\right\|_{L(\h)}
	\end{align*}
	from which it follows easily that $\f{1}{2Ca_n}\leq \left\|(I+P_nK(z)P_n)^{-1}\right\|_{L(\h)}$.
\end{proof}

\subsection{An Abstract Algorithm For Computing Poles}

We now demonstrate how the  the assumptions \eqref{eq:K-KnPn}-\eqref{eq:K-PKP} allow us to construct an abstract algorithm that computes the poles of  $\big(I+K(z)\big)^{-1}$. By an \emph{abstract algorithm} we mean a sequence of subsets of $\C^{\mathrm{ext}}$, which is constructed from $K_n$ and which converges in Attouch-Wets metric to $\{z\in \C^{\mathrm{ext}}\,|\,-1\in\sigma(K(z))\}$. Note that this is not yet an \emph{arithmetic algorithm} in the sense of Definition \ref{def:Algorithm}, since the sets are not computed from a finite amount of information in finitely many steps.

Let $B\subset\C^{\mathrm{ext}}$ be compact and define the exponentially fine lattice $\mathcal{L}_n:=e^{-\f1{a_n}}(\Z+\i\Z)\cap B$. Since we assume that $a_n$ is explicitly known and $K_n(z)$ can be computed in finitely many steps, we can define the set
\begin{align*}
	\Theta_n^B(K) = \left\{z\in \mathcal{L}_n\,\bigg|\,\left\|(I+K_n(z))^{-1}\right\|_{L(\h_n)} \ge \f{1}{2\sqrt{a_n}} \right\}.
\end{align*}
Moreover, note that by \cite[Prop. 10.1]{AHS}, determining whether $\left\|(I+K_n(z))^{-1}\right\|_{L(\h_n)} \ge \f{1}{2\sqrt{a_n}}$ can be done with finitely many arithmetic operations on the matrix elements of $K_n(z)$ for each $z\in\mathcal L_n$.
\begin{lemma}\label{lemma:convergence}
	The assumptions \eqref{eq:K-KnPn}-\eqref{eq:K-PKP} imply the convergence $\Theta_n^B(K)\to \{z\in B\,|\,-1\in\sigma(K(z))\}$ in Attouch-Wets metric.
\end{lemma}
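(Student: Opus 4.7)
My plan is to verify the two criteria from Remark \ref{remark:Attouch-Wets} directly, since $\Theta_n^B(K)$ and the limit set $S(K):=\{z\in B\,|\,-1\in\sigma(K(z))\}$ are both contained in the compact set $B$. I would first note that $K$ is norm-continuous on the compact set $B$, so $(I+K(\cdot))^{-1}$ is norm-continuous on $B\setminus S(K)$ by a Neumann series argument, and in particular blows up as one approaches any $z\in S(K)$.

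For the first (no-spurious-convergence) direction, I assume $z_n\in\Theta_n^B(K)$ with $z_n\to z$. By definition, $\|(I+K_n(z_n))^{-1}\|_{L(\h_n)}\ge\tfrac{1}{2\sqrt{a_n}}$. I apply Lemma \ref{lemma:pollution} at the point $z_n$: if $-1\notin\sigma(K(z_n))$, then
\begin{align*}
\bigl(1-Ca_n\|(I+K(z_n))^{-1}\|_{L(\h)}\bigr)\tfrac{1}{2\sqrt{a_n}}\le \|(I+K(z_n))^{-1}\|_{L(\h)},
\end{align*}
and rearranging gives $\|(I+K(z_n))^{-1}\|_{L(\h)}\ge \tfrac{1}{2\sqrt{a_n}+Ca_n}\to\infty$. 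Combined with the case $-1\in\sigma(K(z_n))$, this forces $(I+K(\cdot))^{-1}$ to blow up along $z_n\to z$, which by norm-continuity of $K$ and the perturbation argument above is incompatible with $-1\notin\sigma(K(z))$. Hence $z\in S(K)$.

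For the second (approximation) direction, I fix $z\in S(K)$ and choose lattice points $z_n^\ast\in\mathcal{L}_n$ with $|z_n^\ast-z|\le \sqrt{2}\,e^{-1/a_n}\to 0$ (using that the exponentially fine lattice is dense in $B$ at points that are accessible by $B$-interior lattice points; if $z$ lies on $\partial B$ one adjusts the approximation by at most a lattice spacing). Norm-continuity of $K$ and $-1\in\sigma(K(z))$ force $\|(I+K(z_n^\ast))^{-1}\|_{L(\h)}\to\infty$ (or $-1\in\sigma(K(z_n^\ast))$ for infinitely many $n$), so in particular $\|(I+K(z_n^\ast))^{-1}\|_{L(\h)}\ge \tfrac{1}{Ca_n}$ for $n$ large. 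Lemma \ref{lemma:inclusion} then yields that either $-1\in\sigma(P_nK(z_n^\ast)P_n)$ or $\|(I+P_nK(z_n^\ast)P_n)^{-1}\|_{L(\h)}\ge \tfrac{1}{2Ca_n}$. Since $P_nK(z_n^\ast)P_n$ acts as $P_nK(z_n^\ast)|_{\h_n}$ on $\h_n$ and as the identity on $\h_n^\perp$, this lower bound transfers to $\|(I+P_nK(z_n^\ast)|_{\h_n})^{-1}\|_{L(\h_n)}$.

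The main obstacle, and the last step, is converting this estimate on $P_nK(z_n^\ast)|_{\h_n}$ into the analogous estimate on $K_n(z_n^\ast)$, using the closeness bound $\|P_nK(z_n^\ast)|_{\h_n}-K_n(z_n^\ast)\|_{L(\h_n)}\le Ca_n$ from \eqref{eq:PnK|-Kn}. I would do this by a standard Neumann-series perturbation: writing $I+P_nK(z_n^\ast)|_{\h_n}=(I+K_n(z_n^\ast))\bigl(I+(I+K_n(z_n^\ast))^{-1}(P_nK(z_n^\ast)|_{\h_n}-K_n(z_n^\ast))\bigr)$, assuming $(I+K_n(z_n^\ast))$ is invertible with small resolvent norm $S$, one obtains $\|(I+P_nK(z_n^\ast)|_{\h_n})^{-1}\|\le S/(1-SCa_n)$. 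Imposing the lower bound $\tfrac{1}{2Ca_n}$ and solving yields $S\ge \tfrac{1}{3Ca_n}\ge\tfrac{1}{2\sqrt{a_n}}$ for $n$ large (since $a_n\downarrow 0$); the case where $(I+K_n(z_n^\ast))$ fails to be invertible is handled trivially via the convention $\|\cdot\|=+\infty$. This shows $z_n^\ast\in \Theta_n^B(K)$ for all sufficiently large $n$, completing the proof. The tension between the three scales $a_n$ (perturbation error), $\sqrt{a_n}$ (thresholding scale), and $e^{-1/a_n}$ (lattice spacing) is what makes the choice of threshold $1/(2\sqrt{a_n})$ work: it is large enough to filter out spurious poles via Lemma \ref{lemma:pollution}, yet small enough to be reached near true poles via Lemma \ref{lemma:inclusion}.
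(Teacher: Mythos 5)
Your overall structure mirrors the paper's proof: the two directions correspond to the paper's \emph{Excluding spectral pollution} and \emph{Spectral inclusion} steps, and the chain Lemma~\ref{lemma:pollution} $\to$ Lemma~\ref{lemma:inclusion} $\to$ restriction to $\h_n$ $\to$ final perturbation via \eqref{eq:PnK|-Kn} is exactly the one used there. Direction 1 and the final perturbation step in Direction 2 are fine, and your constant $\tfrac{1}{3Ca_n}$ is if anything slightly sharper than the paper's $\tfrac{1}{4Ca_n}$.

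However, there is a genuine gap in Direction~2 at the step
\[
\|(I+K(z_n^\ast))^{-1}\|_{L(\h)}\to\infty \;\;\Longrightarrow\;\; \|(I+K(z_n^\ast))^{-1}\|_{L(\h)}\ge \tfrac{1}{Ca_n}\;\text{ for $n$ large.}
\]
This implication does not follow from norm-continuity alone: if $b_n\to\infty$ and $c_n\to\infty$ one cannot conclude $b_n\ge c_n$ eventually. You must compare the \emph{rate} of blow-up of $\|(I+K(\zeta))^{-1}\|$ as $\zeta\to z$ against the rate $1/(Ca_n)$, given that $|z_n^\ast - z|\lesssim e^{-1/a_n}$. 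If, hypothetically, $\|(I+K(\zeta))^{-1}\|$ grew only like $\log\log\tfrac{1}{|z-\zeta|}$, then $\|(I+K(z_n^\ast))^{-1}\|\sim\log(1/a_n)$, which is far smaller than $1/(Ca_n)$, and the argument would collapse. What rescues the situation is that $(I+K(\cdot))^{-1}$ is meromorphic with finite-order poles (by the analytic Fredholm theorem), so its Laurent expansion yields, for some $\nu\ge 1$, $c>0$,
\[
\|(I+K(\zeta))^{-1}\|_{L(\h)}\ge c\,|z-\zeta|^{-\nu}
\]
near $z$; combined with $|z_n^\ast-z|\le \sqrt{2}\,e^{-1/a_n}$ this gives $\|(I+K(z_n^\ast))^{-1}\|\gtrsim e^{\nu/a_n}$, which dominates $1/(Ca_n)$ for large $n$. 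This is precisely the estimate \eqref{eq:Laurent} in the paper's proof, and it is the missing ingredient in yours: the exponentially fine lattice is only useful because the pole blows up at a polynomial (indeed at least $|z-\zeta|^{-1}$) rate, so an exponentially close lattice point is guaranteed to land in the region where the resolvent exceeds the threshold.
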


\begin{proof}

\emph{I. Excluding spectral pollution.} Assume that $z_n\in \Theta_n^B(K)$ with $z_n\to z_0$ for some $z_0\in B$. Then for each $n$ we have $\|(I+K_n(z_n))^{-1}\|_{L(\h_n)} \ge \f{1}{2\sqrt{a_n}}$ and hence by Lemma \ref{lemma:pollution}
\begin{equation*}
\left\|(I+K(z_n))^{-1}\right\|_{L(\h)} \ge \Big(1-Ca_n\left\|(I+K(z_n))^{-1}\right\|_{L(\h)}\Big)\f12a_n^{-\f12}. 
\end{equation*}
(with the convention that $\|(I+K(z_n))^{-1}\|_{L(\h)}=+\infty$ if $-1\in\sigma(K(z_n))$). Whenever $\sqrt{a_n}\leq \f{2}{C}$ this leads to
\begin{align*}
	\left\|(I+K(z_n))^{-1}\right\|_{L(\h)} &\ge \f12\, \f{a_n^{-\f12}}{1+C\f{\sqrt{a_n}}{2}}\\
	&\ge \f14 a_n^{-\f12}.
\end{align*}
It follows  that $\|(I+K(z_n))^{-1}\|_{L(\h)} \to+\infty$ as $n\to+\infty$ and hence $I+K(z_0)$ is not invertible (this follows by yet another Neumann series argument, together with norm continuity of $K$). Hence $z_0$ is a pole.
	
	
\emph{II. Spectral inclusion.}
Assume now that $z$ is a pole, i.e. $-1\in\sigma(K(z))$. Our reasoning will have the structure
\begin{align*}
	-1\in & \sigma(K(z))\\
	&\Downarrow\\
	\exists z_n\in \mathcal{L}_n : \|(I&+K(z_n))^{-1}\|_{L(\h)} \;\text{ large} \\
	&\Downarrow\\
	\|(I+P_nK&(z_n) P_n)^{-1}\|_{L(\h)} \;\text{ large}\\
	&\Downarrow\\
	\|(I+P_nK&(z_n)|_{\h_n})^{-1}\|_{L(\h_n)} \;\text{ large}\\
	&\Downarrow\\
	\|(I+K_n&(z_n))^{-1}\|_{L(\h_n)} \;\text{ large,}
\end{align*}
with a quantitative estimate in each step. To this end, note first that if $-1\in\sigma(K(z))$ for some $z\in B$, then there exist $\nu,c,\eps>0$ (independent of $n$) such that for all  $\zeta$ in a $\eps$-neighborhood of $z$,
\begin{align}\label{eq:Laurent}
	\|(I+K(\zeta))^{-1}\|_{L(\h)}\geq c |z-\zeta|^{-\nu}.
\end{align}
Indeed, since all singularities of $(I+K(z))^{-1}$ are of finite order by the analytic Fredholm theorem, this follows from the Laurent expansion of meromorphic operator valued functions.

It follows from \eqref{eq:Laurent} that for any $z_n$ such that $|z-z_n|\leq e^{-\f1{a_n}}$ one will have 
\begin{equation*}
	\|(I+K(z_n))^{-1}\|_{L(\h)} \geq c |z-z_n|^{-\nu}
	\geq c e^{\f{\nu}{a_n}}
	\geq \f1{Ca_n}
\end{equation*}
for all large enough $n$. We conclude that for any pole $z$ there exists a sequence $z_n\in \mathcal{L}_n$ such that  $z_n\to z$ as $n\to+\infty$ and $\|(I+K(z_n))^{-1}\|_{L(\h)}>\f1{Ca_n}$ for  all but finitely many $n\in\N$.

Next, Lemma \ref{lemma:inclusion} shows that $\|(I+P_nK(z_n)P_n)^{-1}\|_{L(\h)}>\f{1}{2Ca_n}$. Studying this norm further, we have
\begin{align*}
	(I_\h+P_nK(z_n)P_n)^{-1} &= \bigl(I_{\h_n}+P_nK(z_n)|_{\h_n}\bigr)^{-1}\oplus I_{\h_n^\perp}
\end{align*}
and thus
\begin{align*}
	\left\|(I_\h+P_nK(z_n)P_n)^{-1}\right\|_{L(\h)} &= \max\left\{\bigl\|\bigl(I_{\h_n}+P_nK(z_n)|_{\h_n}\bigr)^{-1}\bigr\|_{L(\h_n)}\,,\, 1\right\}.
\end{align*}
Hence, as soon as $a_n<\f1{2C}$, we have $\|(I+P_nK(z_n)P_n)^{-1}\|_{L(\h)} = \|(I+P_nK(z_n)|_{\h_n})^{-1}\|_{L(\h_n)}$. We conclude that if $z$ is a pole, then there exists $z_n\in \mathcal{L}_n$ such that
\begin{align}\label{eq:PnK|_bound}
	\|(I+P_nK(z_n)|_{\h_n})^{-1}\|_{L(\h_n)}>\f1{2Ca_n}
\end{align}
($n$ large enough). A similar reasoning as in Lemma \ref{lemma:pollution} (using \eqref{eq:PnK|-Kn}) shows that now 
\begin{align*}
	\big(1-Ca_n\|(I+K_n(z_n))^{-1}\|_{L(\h_n)}\big)\|(I+P_nK(z_n)|_{\h_n})^{-1}\|_{L(\h_n)} \leq \|(I+K_n(z_n))^{-1}\|_{L(\h_n)},
\end{align*} 
and rearranging terms, together with \eqref{eq:PnK|_bound}, gives
\begin{align*}
	\|(I+K_n(z_n))^{-1}\|_{L(\h_n)} \geq \f{1}{4Ca_n}
\end{align*}
and therefore $z_n\in\Theta_n^B(K)$ for large enough $n$.
The assertion about Attouch-Wets convergence now follows from {Remark \ref{remark:Attouch-Wets}.}
\end{proof}

\section{Definition of the Algorithm}\label{sec:algorithm}

\subsection{Error Estimates}

In this section, we will apply the abstract results of Section \ref{sec:abstract} to our resonance problem. To this end, define $K(z) = q(-\Delta-z^2)^{-1}\chi$.
We write $K$ for the integral kernel of $K(z)$ to simplify notation. Recall that $K$ was given by \eqref{eq:integral_formula} and $\supp(K)\subset Q_M\times Q_M$. 
We will construct an operator approximation $K_n$ of $K$, which satisfies \eqref{eq:K-KnPn}-\eqref{eq:K-PKP} and in addition
\begin{itemize}
	\item[(H1)] The matrix elements of $K_n$ can be computed in finitely many steps from a finite subset $\Lambda_n\subset\Lambda$ (cf. eq. \eqref{eq:input_info} and Def. \ref{def:Algorithm});
	\item[(H2)] The convergence rate $a_n$ is explicitly known (i.e. the sequence $a_n$ can be used to define the algorithm).
\end{itemize}
To this end, let us define $\h_n,P_n$ as follows.
\begin{align}
	\R^d &= \bigcup_{i\in\f1n\Z^d} S_{n,i} :=\bigcup_{i\in\f1n\Z^d} \left(\big[0,\tfrac 1 n\big)^d+i\right) \label{eq:R^d_decomp}\\
	\h_n &= \left\{ f\in L^2(Q_M)\,\middle|\, f|_{S_{n,i}} \text{ constant }\forall i\in\tfrac1n\Z^d \cap Q_M \right\}\notag \\
	P_nf(x) &= \sum_{i\in\f1n\Z^d\cap Q_M} \bigg(n^d\int_{S_{n,i}} f(t)\,dt\bigg)\chi_{S_{n,i}}(x)\label{eq:P_n_def}
\end{align}
Furthermore, we have to make a concrete choice for the approximation $K_n$. An obvious choice is the integral kernel
\begin{align*}
	K_n(x,y) = \sum_{i,j\in n^{-1}\Z^d\cap Q_M} K(i,j)\chi_{S_{n,i}}(x)\chi_{S_{n,j}}(y),
\end{align*}
i.e. a piecewise constant approximation of $K(\cdot,\cdot)$ which can be computed from the values of $K$ on the lattice $n^{-1}\Z^d$ (in dimensions larger than one, the fundamental solution $G$ has a singularity at $x=y$. Hence, we put $K_n:=0$ for $i=j$ in this case).

We will now show that the operators $K,K_n$ satisfy eqs. \eqref{eq:K-KnPn}-\eqref{eq:K-PKP}. To streamline the presentation, we will restrict ourselves to $d\ge 3$ in our computations, the cases $d\leq 2$ being entirely analogous with minor changes in the formulas.
Constants independent of $n$ will be denoted $C$ and their value may change from line to line.

\paragraph{Proof of \eqref{eq:K-PKP}.}

Using the definitions \eqref{eq:R^d_decomp}-\eqref{eq:P_n_def}, we have
\begin{align*}
	Kf(x) - P_nKP_nf(x) &= \int_{\R^d} K(x,y)f(y)\,dy - \int_{\R^d} P_n^xK(x,y)P_nf(y)\,dy,
\end{align*}
where $P_n^xK(x,y)$ means $(P_nK(\cdot,y))(x)$. Using $L^2$-selfadjointness of $P_n$, we conclude that
\begin{align*}
	Kf(x) - P_nKP_nf(x) &= \int_{\R^d} K(x,y)f(y)\,dy - \int_{\R^d} P_n^yP_n^xK(x,y)f(y)\,dy\\
	 &= \int_{\R^d} \big(K(x,y)- P_n^yP_n^xK(x,y)\big)f(y)\,dy,
\end{align*}

Note that $P_n^yP_n^xK(x,y)$ simply yields a step function approximation of $K(x,y)$ like \eqref{eq:P_n_def}, but in dimension $2d$. We conclude by applying Young's inequality \cite[Th. 0.3.1]{Sogge}, that
\begin{align*}
	\|Kf - P_nKP_nf\|_{L^2(\R^d)} &\leq \eta_n \|f\|_{L^2(\R^d)},
\end{align*}
where
\begin{align}\label{eq:eta_n}
	\eta_n = \max\left\{ \sup_{x\in\R^d}\int_{\R^d} |K(x,y)- P_n^yP_n^xK(x,y)|\,dy \,,\,\sup_{y\in\R^d}\int_{\R^d} |K(x,y)- P_n^yP_n^xK(x,y)|\,dx \right\}
\end{align}
Thus, all we have to do is estimate the $L^\infty$-$L^1$ difference between $K$ and its projection onto step functions. To this end, fix $x\in Q_M$, let $\eps>\f{2}{n}$ and decompose the integrals as follows
\begin{align}\label{eq:K-PPK_whole}
\begin{aligned}
	\int_{\R^d} |K(x,y)- & P_n^yP_n^xK(x,y)|\,dy = \int_{Q_M} |K(x,y)-P_n^yP_n^xK(x,y)|\,dy  \\
	&= \int_{Q_M\setminus B_\eps(x)} |K(x,y)-P_n^yP_n^xK(x,y)|\,dy + \int_{B_\eps(x)} |K(x,y)-P_n^yP_n^xK(x,y)|\,dy 
\end{aligned}
\end{align}
The integral over $B_\eps(x)$ can be estimated by $\int_{B_\eps(x)} 2|K(x,y)| \,dy$, while for the remaining integral we can use the fact that the derivative of $K$ is bounded, as follows. Let $j\in\f1n\Z^d$ be such that $x\in S_{n,j}$, see Figure \ref{fig:Sketch}. Let  $i\in\f1n\Z^d$ be such that $|i-j|>\f{\eps}{2}$. Then:

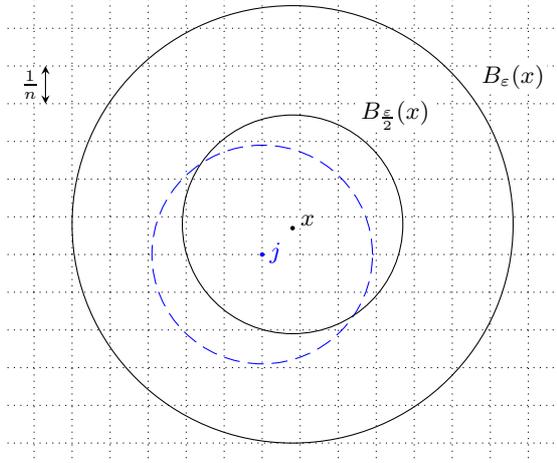
\begin{figure}[htbp]
	\centering
%
%

\begin{tikzpicture}[>=stealth,scale=0.5]
	\filldraw[color=blue, fill=white, dash pattern={on 6pt off 3pt}] (-0.8,-0.8) circle(2.9);
	\draw[color=black, dotted, shift={(0.2,0.2)}] (-7.7,-6.4) grid (6.9,5.9);
	\draw[color=black] (0,0) circle(2.9);
	\fill[fill=black](0,-0.1) circle(0.06);
	\node[fill=white,inner sep=1pt] at (0.4,0.1) {\small$x$};
	\node[fill=white,inner sep=1pt] at (2.7,2.9) {\small$B_{\frac{\varepsilon}{2}}(x)$};
	
	\fill[fill=blue](-0.8,-0.8) circle(0.06);
	\node[fill=white,inner sep=1pt] at (-0.45,-0.75) {\small\color{blue}$j$};
	
	\draw[color=black] (0,0) circle(5.8);
	\node[fill=white,inner sep=1pt] at (5.8,3.9) {\small$B_{\varepsilon}(x)$};
	\node[fill=white,inner sep=1pt] at (-6.9,3.7) {\small$\tfrac1n$};
	\draw[<->]  (-6.5,3.2) -- (-6.5,4.2);
\end{tikzpicture}

	\caption{
		Sketch of the geometry in the calculation leading to \eqref{eq:complement_of_B_eps}. The sum over $i$ includes all cells whose nodes are outside the dashed ball centered at $j$. 
	}
	\label{fig:Sketch}
\end{figure}

\begin{align*}
	\int_{S_{n,i}} |K(x,y) & -P_n^yP_n^xK(x,y)|\,dy = \int_{S_{n,i}} \left|K(x,y)-\dashint_{S_{n,i}\times S_{n,j}} K(s,t)dsdt\right|\,dy\\
	&\leq \int_{S_{n,i}} \dashint_{S_{n,i}\times S_{n,j}} \left|K(x,y)- K(s,t)\right|dsdt\,dy\\
	&= \int_{S_{n,i}} \dashint_{S_{n,i}\times S_{n,j}} \left|\int_0^1\nabla K\big(\tau\big(\begin{smallmatrix}x\\y\end{smallmatrix}\big)+(1-\tau)\big(\begin{smallmatrix}s\\t\end{smallmatrix}\big)\big)\cdot\big(\big(\begin{smallmatrix}x\\y\end{smallmatrix}\big)-\big(\begin{smallmatrix}s\\t\end{smallmatrix}\big)\big)d\tau \right|dsdt\,dy\\
	&\leq \int_{S_{n,i}} \dashint_{S_{n,i}\times S_{n,j}} \int_0^1\big|\nabla K\big(\tau\big(\begin{smallmatrix}x\\y\end{smallmatrix}\big)+(1-\tau)\big(\begin{smallmatrix}s\\t\end{smallmatrix}\big)\big)\big|\big|\big(\begin{smallmatrix}x\\y\end{smallmatrix}\big)-\big(\begin{smallmatrix}s\\t\end{smallmatrix}\big)\big| d\tau\, dsdt\,dy\\
	&\leq \int_{S_{n,i}} \dashint_{S_{n,i}\times S_{n,j}} \int_0^1\big\|\nabla K\big\|_{L^\infty(S_{n,i}\times S_{n,j})}\f{2\sqrt d}n d\tau\, dsdt\,dy\\
\end{align*}
Summing over $i$, we finally obtain (cf. Figure \ref{fig:Sketch})
\begin{align}
	\int_{\R^d\setminus B_{\eps}(x)} |K(x,y)-P_n^yP_n^x K(x,y)|\,dy &\leq \sum_{i:|i-j|>\f\eps2} \int_{S_{n,i}} |K(x,y)-P_n^yP_n^xK(x,y)|\,dy\nonumber\\
	&\leq \sum_{i:|i-j|>\f\eps2} \int_{S_{n,i}} \dashint_{S_{n,i}\times S_{n,j}} \int_0^1\big\|\nabla K\big\|_{L^\infty(S_{n,i}\times S_{n,j})}\f{2\sqrt d}n d\tau\, dsdt\,dy\nonumber\\
	&\leq \f{2\sqrt d}n \big\|\nabla K\big\|_{L^\infty(Q_M\setminus B_{\f{\eps}{2}}(x))} \int_{Q_M\setminus B_{\f{\eps}{4}}(x)} \,dy\nonumber\\
	&= |Q_M|\f{2\sqrt d}{n} \big\|\nabla K\big\|_{L^\infty(Q_M\setminus B_{\f{\eps}{2}}(x))} \nonumber\\
	&\leq  |Q_M|\f{2\sqrt d}{n} \|q\|_{\mathcal C^1} C\left(\f{\eps}{2}\right)^{1-d}\nonumber\\
	&\leq C\f{|Q_M|}n \eps^{1-d}, \label{eq:complement_of_B_eps}
\end{align}
where the fifth line follows from \eqref{eq:G_bound_2}, in the appendix, and the bound $\|q\|_{\mathcal C^1}\leq +\infty$. Using \eqref{eq:complement_of_B_eps} in \eqref{eq:K-PPK_whole}, we conclude that
\begin{align*}
	\int_{\R^d} |K(x,y)- P_n^yP_n^xK(x,y)|\,dy &\leq C\f{|Q_M|}{n} \eps^{1-d} + \int_{B_\eps(x)} 2|K(x,y)|\,dy\\
	\Rightarrow\quad\sup_{x\in \R^d}\int_{\R^d} |K(x,y)- P_n^yP_n^xK(x,y)|\,dy &\leq C\f{|Q_M|}{n} \eps^{1-d} + C'\eps^2,
\end{align*}
where in the last line we have used \eqref{eq:G_bound} and the boundedness of $q$ again.

With an analogous calculation for $\sup_{y\in\R^d}\int_{\R^d} |K(x,y)- P_n^yP_n^xK(x,y)|\,dx$ (which we omit here), and recalling that $\eta_n$ was defined by \eqref{eq:eta_n}, we conclude that for all $\eps>0$
\begin{align*}
	\eta_n\leq \f{1}{n}C\eps^{1-d} + C'\eps^2.
\end{align*}
Choosing $\eps:=n^{-\f{1}{d+1}}$, we conclude that
\begin{align}\label{eq:explicit_rate}
	\|Kf - P_nKP_nf\|_{L^2(\R^d)} &\leq \f{C+C'}{n^{\f{2}{d+1}}} \|f\|_{L^2(\R^d)}
\end{align}
and hence $\|K-P_nKP_n\|_{L(L^2(\R^d))}\to 0$ as $n\to+\infty$ with rate (at least) $a_n=n^{-\f{2}{d+1}}\leq n^{-\f1d}$.

\begin{remark}
	Note that the constants $C,C'$ all depend on the spectral parameter $z$, but are bounded for $z$ in compact subsets of $\C^{\mathrm{ext}}$, because $K$ depends continuously on $z$.
\end{remark}

\paragraph{Proof of \eqref{eq:PnK|-Kn} and (H1).} 

An orthonormal basis of $\h_n$ is given by the functions 
\begin{align*}
	e_i:=n^{\f d2}\chi_{S_{n,i}},\qquad i\in\tfrac1n\Z^d\cap Q_M,
\end{align*}
so that
	\begin{equation*}
	P_nf = \sum_{j\in \f1n\Z\cap Q_M}\p{f,e_j}_{L^2}e_j
	\end{equation*}
in this basis. It is then easily seen that in this basis $K_n$ has the matrix elements
\begin{equation*}
	(K_n)_{ij} = n^{-d} K(i,j).
\end{equation*}
Note that this proves (H1): The matrix elements of $K_n$ can be calculated in finitely many arithmetic operations from the finite set $\Lambda_n:=\{K(i,j)\,|\,i,j\in\f1n\Z\cap Q_M\}\subset\Lambda$.
Similarly, it can be seen that the matrix elements of $P_nK|_{\h_n}$ in this basis are given by
\begin{align*}
	(P_nK)_{ij} &= n^{d}\int_{S_{n,i}}\int_{S_{n,j}}K(x,y)\,dxdy  \\[1mm]
	&=: n^{-d} \langle K\rangle_{ij},
\end{align*}
where we have introduced the notation $\langle \cdot\rangle_{ij}$ for the mean value on $S_{n,i}\times S_{n,j}$.
Let $f=\sum_{j} f_je_j \in \h_n$. From the above, and Young's inequality, we conclude that
\begin{align*}
	\|(P_nK-K_n)f\|_{L^2}^2 &= \sum_{i\in \f1n\Z^d\cap Q_M}\left|\sum_{j\in \f1n\Z^d\cap Q_M} n^{-d} \big(K(i,j)-\langle K\rangle_{ij}\big)f_j\right|^2\\
	&\leq \tilde\eta_n^2\|f\|_{L^2}^2,
\end{align*}
where 
\begin{align*}
		\tilde\eta_n := \max\Bigg\{ \sup_{i\in\f1n\Z^d\cap Q_M}\,\sum_{j\in \f1n\Z^d\cap Q_M} n^{-d}|K(i,j)-\langle K\rangle_{ij}| \,,\,\sup_{j\in\f1n\Z^d\cap Q_M}\,\sum_{i\in \f1n\Z^d\cap Q_M} n^{-d}|K(i,j)-\langle K\rangle_{ij}| \Bigg\}.
\end{align*}
Hence, we have reduced the problem to estimating these $\ell^\infty$-$\ell^1$ differences. This can be done similarly to \eqref{eq:K-PPK_whole}, by separating $(Q_M\times Q_M)\cap (\f1n\Z\times\f1n\Z)$ into an $\eps$-region around $i=j$ and the rest: 
\begin{align}
	\sum_{j\in \f1n\Z^d\cap Q_M} n^{-d}|K(i,j)-\langle K\rangle_{ij}| &= \sum_{|j-i|>\eps}n^{-d}|K(i,j)-\langle K\rangle_{ij}| + \sum_{|j-i|\leq\eps}n^{-d}|K(i,j)-\langle K\rangle_{ij}| \nonumber \\
	&\leq Cn^{-1}\sum_{|j-i|>\eps} n^{-d} \|\nabla K\|_{L^\infty(\{|x-y|>\eps\})} + \sum_{|j-i|\leq\eps}n^{-d}|K(i,j)-\langle K\rangle_{ij}|  \nonumber\\
	&\leq Cn^{-1} \eps^{-d+1} + \sum_{|j-i|\leq\eps}n^{-d}|K(i,j)-\langle K\rangle_{ij}|,  \label{eq:discrete_estimate}
\end{align}
where we have used \eqref{eq:G_bound_2} and the $\mathcal C^1$-boundedness of $q$ in the last line. To estimate the last term on the right hand side, note that $|K(i,j)-\langle K\rangle_{ij}|\leq C|j-i|^{-(d-2)}$ near $i=j$ (cf. eq. \eqref{eq:G_bound}). 
Next, note that the sum $n^{-d}\sum_{j:|j-i|\leq\eps} \f{1}{|j-i|^{d-2}}$ can be interpreted as an integral over a piecewise constant function, which approximates $(x,y)\mapsto |x-y|^{2-d}$. But this function is dominated by $(x,y)\mapsto |x-y|^{1-d}$ when $|x-y|$ is small, and therefore we have
\begin{align}
	n^{-d}\sum_{j:|j-i|\leq\eps} \f{1}{|j-i|^{d-2}} &\leq C \int_{B_{2\eps}(x)} |x-y|^{1-d}\,dy \nonumber\\
	&= C\int_0^{2\eps}  r^{1-d}\,\omega_d r^{d-1} dr \nonumber\\
	&= 2C\omega_d\, \eps \label{eq:discrete_2}
\end{align}
where $\omega_d$ denotes the volume of the unit sphere in $\R^d$.
Note that the above calculation is uniform in $i$, because $q$ is bounded. 
Plugging \eqref{eq:discrete_2} into \eqref{eq:discrete_estimate}, we arrive at
\begin{align*}
	\sum_{j\in \f1n\Z^d\cap Q_M} n^{-d}|K(i,j)-\langle K\rangle_{ij}| 
	&\leq Cn^{-1} \eps^{-d+1} + 2C\omega_d\, \eps.
\end{align*}
Choosing $\eps=n^{-\f1d}$ yields
\begin{align}\label{eq:explicit_rate2}
	\sum_{j\in \f1n\Z^d\cap Q_M} n^{-d}|K(i,j)-\langle K\rangle_{ij}| 
	&\leq C'n^{-\f1d}.
\end{align}
 Finally, swapping $i$ and $j$ will give an analogous estimate and we can conclude that $\tilde\eta_n\to 0$ with rate $a_n = n^{-\f1d}$.
\begin{remark}
	Note again that the constants $C,C'$ depend on $z$, but are bounded for $z$ in compact subsets of $\C^{\mathrm{ext}}$, since $K$ depends continuously on $z$.
\end{remark}

\paragraph{Proof of \eqref{eq:K-KnPn} and (H2).}
Estimate \eqref{eq:K-KnPn} in fact follows from \eqref{eq:K-PKP} and \eqref{eq:PnK|-Kn}. Indeed, writing $K_n$ and $K$ as block operator matrices w.r.t. the decomposition $\h = \h_n\oplus\h_n^\perp$, we have
\begin{align*}
	K = \begin{pmatrix}
		P_nK|_{\h_n} & D_1\\
		D_2 & D_3
	\end{pmatrix},
\end{align*}
with some operators $D_1,D_2,D_3$. Estimate \eqref{eq:K-PKP} shows that 
\begin{align}\label{eq:0BCD}
	\left\|\begin{pmatrix}
		0 & D_1\\
		D_2 & D_3
	\end{pmatrix}\right\|_{L(\h)}<Ca_n,
\end{align}
whereas estimate \eqref{eq:PnK|-Kn} shows that 
\begin{align}\label{eq:K000}
	\left\|P_nK|_{\h_n}-K_n\right\|_{L(\h_n)} = \left\|\begin{pmatrix}
		P_nK|_{\h_n}-K_n & 0\\
		0 & 0
	\end{pmatrix}\right\|_{L(\h)}<Ca_n.
\end{align}
Together, eqs. \eqref{eq:0BCD} and \eqref{eq:K000} imply that
\begin{align*}
	\|K(z)-K_n(z)P_n\|_{L(\h)}  = \left\|\begin{pmatrix}
		P_nK|_{\h_n}-K_n & D_1\\
		D_2 & D_3
	\end{pmatrix}\right\|_{L(\h)} < 2Ca_n.
\end{align*}
 The explicit rates obtained in \eqref{eq:explicit_rate} and \eqref{eq:explicit_rate2} prove that our approximation scheme satisfies (H2).

\subsection{The Algorithm}
It remains to extend the algorithm $\Theta_n^B$ from a single compact set $B\subset\mathbb C^\mathrm{ext}$ to the entire complex plane. This is done via a diagonal-type argument. 
\subsubsection{Odd Dimensions}
We choose a tiling of $\C$, where we start with a square $B_1=\big\{z\in\C\,\big|\,|\re(z)|\leq\f12, \, -1\leq|\im(z)|\leq 0 \big\}$ and then add squares in a counterclockwise spiral manner as shown in Figure \ref{fig:tiling}.

\begin{figure}[htbp]
	\centering
	\contourlength{2pt}
	\begin{tikzpicture}[>=stealth]
		\draw[-stealth] (-3,0)--(3,0) node[above]{$\mathrm{Re}\, z$};
		\draw[-stealth] (0,-2.5)--(0,2) node[right]{$\mathrm{Im}\, z$};
		\draw   (-0.05,-1)--(0.05,-1);
		\draw   (-0.05,-2)--(0.05,-2);
		\draw   (-0.05,-3)--(0.05,-3);
		\foreach \y in {0,-1,-2}
		\foreach \x in {0,1} {
			\draw (-0.5+\x,-2-\y) rectangle (0.5+\x,-1-\y);
		}
		\draw (-1.5,0) rectangle (-0.5,1);
		\node at (0,-0.5) {\contour{white}{$B_1$}};
		\node at (0,-1.5) {\contour{white}{$B_2$}};
		\node at (1,-1.5) {$B_3$};
		\node at (1,-0.5) {$B_4$};
		\node at (1,0.5) {$B_5$};
		\node at (0,0.5) {\contour{white}{$B_6$}};
		\node at (-1,0.5) {$B_7$};
		\contourlength{1pt}
		\node at (-1,-0.4) {\contour{white}{$\vdots$}};
	\end{tikzpicture}
	\caption{Tiling of the complex plane}
	\label{fig:tiling}
\end{figure}
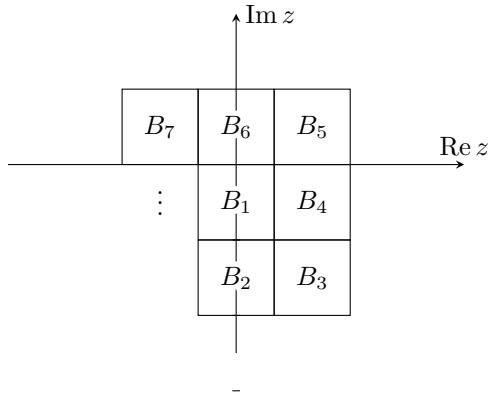
Next, we define our algorithm as follows. We let
\begin{align*}
	\Gamma_1(q) &:= \Theta_1^{B_1}(q) \\
	\Gamma_2(q) &:= \Theta_2^{B_1}(q) \cup \Theta_2^{B_2}(q) \\
	\Gamma_3(q) &:= \Theta_3^{B_1}(q) \cup \Theta_3^{B_2}(q) \cup \Theta_3^{B_3}(q) \\
	&\vdots\\
	\Gamma_n(q) &:= \bigcup_{j=1}^n \Theta_n^{B_j}(q).
\end{align*}
Lemma \ref{lemma:convergence} ensures that each $\Theta_n^{B_k}$ converges to $\Res(q)\cap B_k$ for fixed $k$ and since the $\{B_k\}$ form a tiling of $\C$, it follows that $\Gamma_n(q)\to \Res(q)$ in Attouch-Wets metric.

\subsubsection{Even Dimensions}

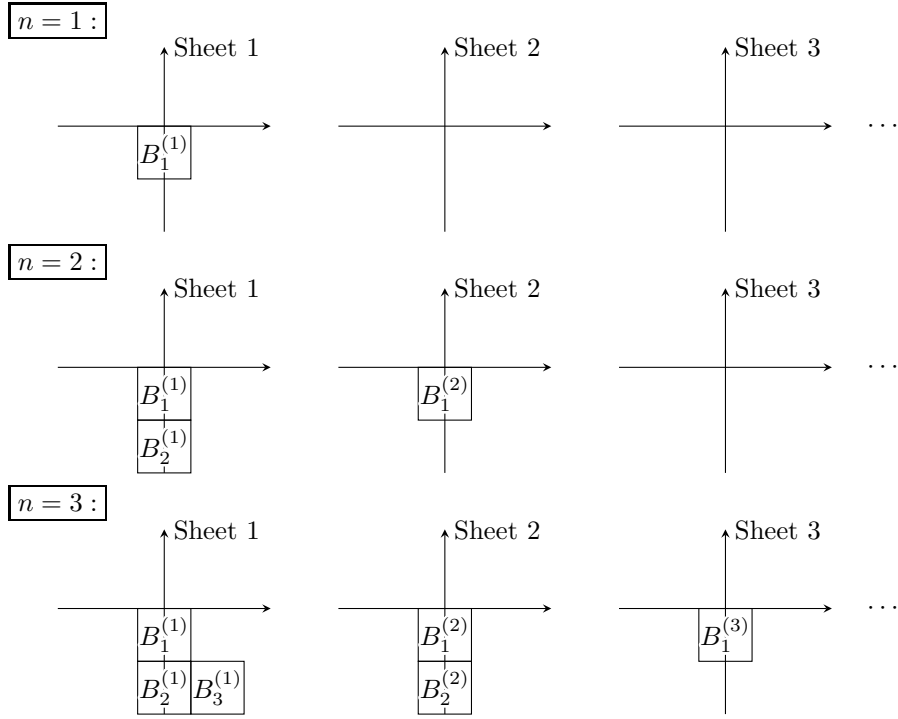
\begin{figure}[htbp]
	\centering
	\contourlength{1pt}
%
%

\begin{tikzpicture}[>=stealth,scale=0.7]
	\draw[-stealth] (-2,0)--(2,0);
	\draw[-stealth] (0,-2)--(0,1.5) node[right]{Sheet 1};
	\foreach \y in {-1}
	\foreach \x in {0} {
		\draw (-0.5+\x,-2-\y) rectangle (0.5+\x,-1-\y);
	}
	\node at (0,-0.5) {\contour{white}{$B_1^{(1)}$}};
	
\node at (-2,2) {$\boxed{n=1:}$};
	
	\begin{scope}[xshift=150]
		\draw[-stealth] (-2,0)--(2,0);
		\draw[-stealth] (0,-2)--(0,1.5) node[right]{Sheet 2};
	\end{scope}
	
	\begin{scope}[xshift=300]
		\draw[-stealth] (-2,0)--(2,0);
		\draw[-stealth] (0,-2)--(0,1.5) node[right]{Sheet 3};
		\draw (3,0) node{$\cdots$};
	\end{scope}
	
	
	\begin{scope}[xshift=0,yshift=-130]
\node at (-2,2) {$\boxed{n=2:}$};
		\draw[-stealth] (-2,0)--(2,0);
		\draw[-stealth] (0,-2)--(0,1.5) node[right]{Sheet 1};
		\foreach \y in {-1,0}
		\foreach \x in {0} {
			\draw (-0.5+\x,-2-\y) rectangle (0.5+\x,-1-\y);
		}
		\node at (0,-0.5) {\contour{white}{$B_1^{(1)}$}};
		\node at (0,-1.5) {\contour{white}{$B_2^{(1)}$}};
	\end{scope}
	
	\begin{scope}[xshift=150,yshift=-130]
		\draw[-stealth] (-2,0)--(2,0);
		\draw[-stealth] (0,-2)--(0,1.5) node[right]{Sheet 2};
		\foreach \y in {-1}
		\foreach \x in {0} {
			\draw (-0.5+\x,-2-\y) rectangle (0.5+\x,-1-\y);
		}
		\node at (0,-0.5) {\contour{white}{$B_1^{(2)}$}};
	\end{scope}
	
	\begin{scope}[xshift=300,yshift=-130]
		\draw[-stealth] (-2,0)--(2,0);
		\draw[-stealth] (0,-2)--(0,1.5) node[right]{Sheet 3};
		\draw (3,0) node{$\cdots$};
	\end{scope}
	
	\begin{scope}[xshift=0,yshift=-260]
\node at (-2,2) {$\boxed{n=3:}$};
		\draw[-stealth] (-2,0)--(2,0);
		\draw[-stealth] (0,-2)--(0,1.5) node[right]{Sheet 1};
		\foreach \y in {0,-1}
		\foreach \x in {0} {
			\draw (-0.5+\x,-2-\y) rectangle (0.5+\x,-1-\y);
		}
		\draw (0.5,-2) rectangle (1.5,-1);
		\node at (0,-0.5) {\contour{white}{$B_1^{(1)}$}};
		\node at (0,-1.5) {\contour{white}{$B_2^{(1)}$}};
		\node at (1,-1.5) {$B_3^{(1)}$};
	\end{scope}
	
	\begin{scope}[xshift=150,yshift=-260]
		\draw[-stealth] (-2,0)--(2,0);
		\draw[-stealth] (0,-2)--(0,1.5) node[right]{Sheet 2};
		\foreach \y in {-1,0}
		\foreach \x in {0} {
			\draw (-0.5+\x,-2-\y) rectangle (0.5+\x,-1-\y);
		}
		\node at (0,-0.5) {\contour{white}{$B_1^{(2)}$}};
		\node at (0,-1.5) {\contour{white}{$B_2^{(2)}$}};
	\end{scope}
	
	\begin{scope}[xshift=300,yshift=-260]
		\draw[-stealth] (-2,0)--(2,0);
		\draw[-stealth] (0,-2)--(0,1.5) node[right]{Sheet 3};
		\foreach \y in {-1}
		\foreach \x in {0} {
			\draw (-0.5+\x,-2-\y) rectangle (0.5+\x,-1-\y);
		}
		\node at (0,-0.5) {\contour{white}{$B_1^{(3)}$}};
		\draw (3,0) node{$\cdots$};
	\end{scope}
	
\end{tikzpicture}

	\caption{Tiling of the logarithmic Riemann surface}
	\label{fig:even-dim_tiling}
\end{figure}

In even dimensions we have to cover not only the complex plane $\C$, but its logarithmic covering space, which is equivalent to covering infinitely many copies of the complex plane. A similar strategy as in the odd dimensional case, together with a diagonal-type argument does the job in this case. Indeed, we can construct a cover by boxes $B_n$ as follows (cf. Figure \ref{fig:even-dim_tiling}). 
\begin{enumerate}
	\item Start with box $B_1$ (defined as in the odd dimensional case) on the first Riemann sheet;
	\item Add a box $B_2$ below $B_1$ on sheet number 1 and add a box $B_1$ on sheet number 2;
	\item Add a box $B_3$ on sheet number 1, add a box $B_2$ on sheet number 2 and a box $B_1$ on sheet number 3;
	\item \dots
\end{enumerate}

Next, define again
\begin{align*}
	\Gamma_1(q) &:= \Theta_1^{B_1^{(1)}}(q) \\
	\Gamma_2(q) &:= \Theta_2^{B_1^{(1)}}(q) \cup \Theta_2^{B_2^{(1)}}(q) \cup \Theta_2^{B_1^{(2)}}(q) \\
	\Gamma_3(q) &:= \Theta_3^{B_1^{(1)}}(q) \cup \Theta_3^{B_2^{(1)}}(q) \cup \Theta_3^{B_3^{(1)}}(q) \cup \Theta_3^{B_1^{(2)}}(q) \cup \Theta_3^{B_2^{(2)}}(q) \cup \Theta_3^{B_3^{(1)}}(q) \\
	&\vdots\\
	\Gamma_n(q) &:= \bigcup_{k=1}^n\bigcup_{j=1}^{n-k+1} \Theta_n^{B_j^{(k)}}(q).
\end{align*}
Lemma \ref{lemma:convergence} ensures that each $\Theta_n^{B_j^{(k)}}$ converges to $\Res(q)\cap B_j^{(k)}$ for fixed $k$ and since the $\{B_j^{(k)}\}$ form a tiling of $\C^{\mathrm{ext}}$, it follows that $\Gamma_n(q)\to \Res(q)$ in Attouch-Wets metric. The proof of Theorem \ref{th:mainth} is complete.

\section{Numerical Results}\label{sec:Numerics}
Software to compute resonances has been in existence for decades \cite{Rittby81, BEM00, AAD01}. The authors of \cite{BZ} recently proposed a collection of MATLAB codes to compute resonance poles and scattering of plane waves efficiently (``MatScat'', cf. \cite{Bindel}). In this section we compare the results of our  algorithm to that of MatScat.

In order to study the actual numerical performance of our algorithm, we coded a MATLAB routine for the one-dimensional case with $\supp(q)\subset[a,b]$ (for some known $a<b$), which computes the set 
$$\Big\{z\in \mathcal{L}_n\cap B\,\Big|\,\Big\|\Big(\mathds{1}_{n\times n}+\big(K(i,j)\big)_{i,j\in \f{b-a}{n}\Z\cap[a,b]}\Big)^{-1}\Big\|>C\Big\},$$ 
where the region $B$ in the complex plane, the lattice distance of $\mathcal{L}_n$ and cutoff threshold $C$ were treated as independent parameters.

\paragraph{Comparison of results.} 
Figures \ref{fig:Gauss_well} and \ref{fig:Trapping} show the output of MatScat (black dots) versus the output of our algorithm (blue regions) for a Gaussian well and trapping potential, respectively. As the plots show, there is agreement between the two.

\begin{figure}[htbp]
\centering
	\input{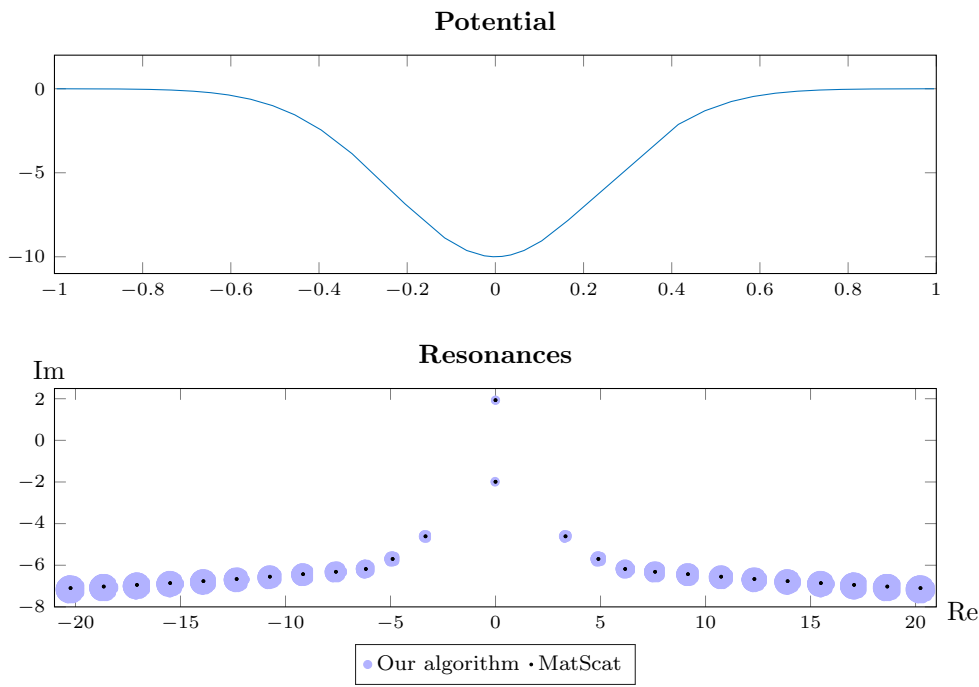}
	\caption{Comparison of the result of \cite{Bindel} (black) and our algorithm (blue) for a Gaussian well supported between $-1$ and $1$. The chosen parameter values are: $n=100$; threshold for resolvent norm: {$C=200$}; number of lattice points in the shown region of the complex plane: $M\times 4M = 1000\times 4000$.}
	\label{fig:Gauss_well}
\end{figure}

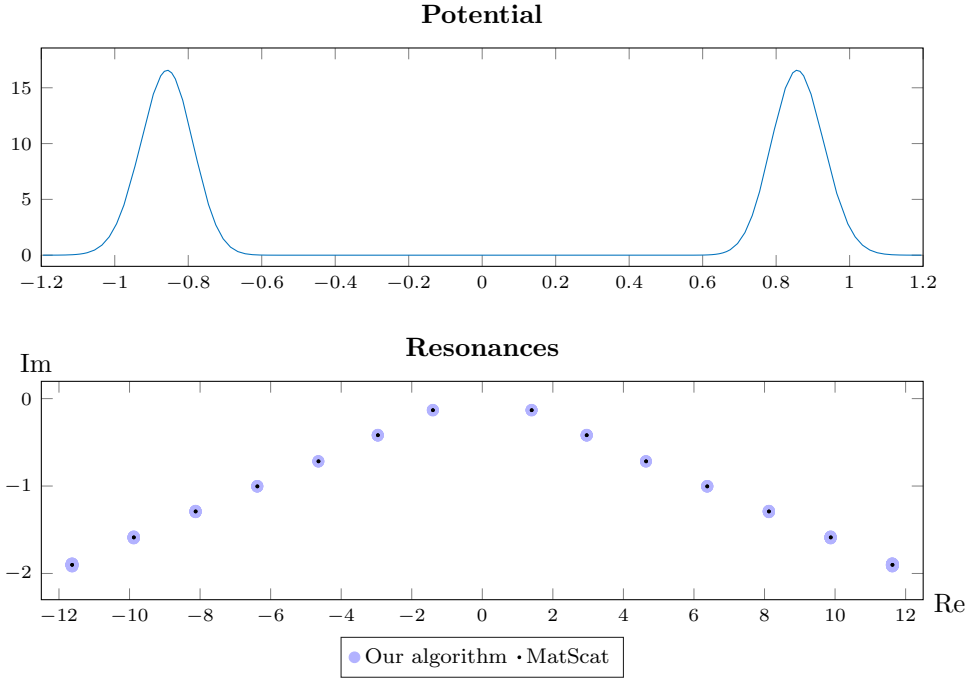
\begin{figure}[htbp]
\centering
%
%

%

\definecolor{mycolor1}{rgb}{0.00000,0.44700,0.74100}%
\definecolor{mycolor2}{rgb}{0.85000,0.32500,0.09800}%
\definecolor{mycolor3}{rgb}{0.92900,0.69400,0.12500}%
\pgfplotsset{every tick label/.append style={font=\scriptsize}}
\begin{tikzpicture}

\begin{axis}[%
width=0.761\textwidth,
height=0.19\textwidth,
at={(0\textwidth,0.29\textwidth)},
scale only axis,
xmin=-1.2,
xmax=1.2,
ymin=-1,
ymax=18.5650934483496,
axis background/.style={fill=white},
title style={font=\bfseries},
title={Potential},
legend style={legend cell align=left, align=left, draw=white!15!black},
legend style={at={(0.5,-0.17)},anchor=north},
legend columns=2
]
\addplot [color=mycolor1]
  table[row sep=crcr]{%
-1.195	0.000717238130349074\\
-1.155	0.0061863920263292\\
-1.135	0.0165912947722511\\
-1.115	0.0418396490982396\\
-1.105	0.0649093315649587\\
-1.095	0.0991309233588957\\
-1.085	0.149020866945307\\
-1.075	0.220481747356338\\
-1.055	0.459924570156364\\
-1.035	0.898952325922703\\
-1.015	1.64469030139885\\
-0.995000000000001	2.81361178012681\\
-0.975000000000001	4.49555812271544\\
-0.945	7.96698950550366\\
-0.895	14.4298594035095\\
-0.875	16.0661043264973\\
-0.864999999999998	16.4728968090474\\
-0.855	16.5650934483496\\
-0.844999999999999	16.3336775436047\\
-0.835000000000001	15.7884096259715\\
-0.815000000000001	13.8839856848365\\
-0.774999999999999	8.36227088392535\\
-0.745000000000001	4.55240487336904\\
-0.725000000000001	2.71000658055781\\
-0.704999999999998	1.46848861098815\\
-0.684999999999999	0.722056613208235\\
-0.664999999999999	0.321055536736633\\
-0.645	0.128608126500584\\
-0.625	0.0462224667698656\\
-0.605	0.0148380699322175\\
-0.585000000000001	0.00423337773480625\\
-0.555	0.000510916750116053\\
-0.405000000000001	8.32542923490109e-11\\
0.574999999999999	0.00215974493698212\\
0.594999999999999	0.00804594623955879\\
0.605	0.0148380699322175\\
0.614999999999998	0.0265691421982588\\
0.625	0.0462224667698656\\
0.635000000000002	0.0781747818540772\\
0.645	0.128608126500584\\
0.655000000000001	0.205919261669944\\
0.664999999999999	0.321055536736633\\
0.675000000000001	0.487680722912131\\
0.695	1.04252410243703\\
0.715	2.0188640213759\\
0.734999999999999	3.55326840335787\\
0.754999999999999	5.70116112098864\\
0.795000000000002	11.241633563249\\
0.824999999999999	14.9572439809035\\
0.844999999999999	16.3336775436047\\
0.855	16.5650934483496\\
0.864999999999998	16.4728968090474\\
0.875	16.0661043264972\\
0.895	14.4298594035095\\
0.925000000000001	10.6732543039576\\
0.965	5.53679165861591\\
0.995000000000001	2.8136117801268\\
1.015	1.64469030139885\\
1.035	0.898952325922728\\
1.055	0.459924570156364\\
1.075	0.220481747356342\\
1.095	0.0991309233588957\\
1.115	0.0418396490982396\\
1.135	0.0165912947722511\\
1.155	0.0061863920263292\\
1.185	0.00125709945772456\\
1.195	0.000717238130349074\\
};


\end{axis}

\begin{axis}[%
width=0.761\textwidth,
height=0.19\textwidth,
at={(0\textwidth,0\textwidth)},
scale only axis,
xmin=-12.5,
xmax=12.4975,
ymin=-2.3,
ymax=0.1975,
axis background/.style={fill=white},
title style={font=\bfseries},
title={Resonances},
legend style={legend cell align=left, align=left, draw=white!15!black},
legend style={at={(0.5,-0.17)},anchor=north},
legend columns=2,
x label style={at={(axis description cs:1.03,0.25)},anchor=north},
xlabel={Re},
y label style={at={(axis description cs:0.1,1)}, rotate=-90, anchor=south}, 
ylabel={Im}
]
\addplot [color=mycolor1, draw=none, mark size=2pt, mark=*, mark options={solid, white!70!blue}, only marks]
  table[row sep=crcr]{%
-11.645	-1.895\\
-11.645	-1.8975\\
-11.645	-1.9\\
-11.645	-1.9025\\
-11.645	-1.905\\
-11.645	-1.9075\\
-11.6425	-1.89\\
-11.6425	-1.8925\\
-11.6425	-1.91\\
-11.6425	-1.9125\\
-11.64	-1.8875\\
-11.64	-1.89\\
-11.64	-1.8925\\
-11.64	-1.895\\
-11.64	-1.8975\\
-11.64	-1.9\\
-11.64	-1.9025\\
-11.64	-1.905\\
-11.64	-1.9075\\
-11.64	-1.91\\
-11.64	-1.9125\\
-11.64	-1.915\\
-11.6375	-1.9175\\
-11.635	-1.885\\
-11.6325	-1.92\\
-11.62	-1.885\\
-11.62	-1.8875\\
-11.62	-1.89\\
-11.62	-1.8925\\
-11.62	-1.895\\
-11.62	-1.8975\\
-11.62	-1.9\\
-11.62	-1.9025\\
-11.62	-1.905\\
-11.62	-1.9075\\
-11.62	-1.91\\
-11.62	-1.9125\\
-11.62	-1.915\\
-11.62	-1.9175\\
-11.62	-1.92\\
-9.89	-1.5825\\
-9.89	-1.585\\
-9.89	-1.5875\\
-9.89	-1.59\\
-9.89	-1.5925\\
-9.8875	-1.58\\
-9.8875	-1.595\\
-9.885	-1.5975\\
-9.8825	-1.575\\
-9.8825	-1.6\\
-9.8725	-1.5775\\
-9.8725	-1.5825\\
-9.8725	-1.585\\
-9.8725	-1.5875\\
-9.8725	-1.59\\
-9.8725	-1.5925\\
-9.8725	-1.595\\
-9.8725	-1.5975\\
-8.135	-1.2875\\
-8.135	-1.29\\
-8.135	-1.2925\\
-8.1325	-1.285\\
-8.1325	-1.295\\
-8.1325	-1.2975\\
-8.13	-1.2825\\
-8.1275	-1.3\\
-8.125	-1.2825\\
-8.125	-1.285\\
-8.125	-1.2875\\
-8.125	-1.29\\
-8.125	-1.2925\\
-8.125	-1.295\\
-8.125	-1.2975\\
-8.125	-1.3\\
-6.3825	-1\\
-6.3825	-1.0025\\
-6.3825	-1.005\\
-6.3825	-1.0075\\
-6.38	-0.9975\\
-6.3775	-0.9975\\
-6.3775	-1\\
-6.3775	-1.0025\\
-6.3775	-1.005\\
-6.3775	-1.0075\\
-4.65	-0.717499999999999\\
-4.6475	-0.7125\\
-4.6475	-0.715\\
-4.6475	-0.720000000000001\\
-4.6475	-0.7225\\
-2.965	-0.414999999999999\\
-2.965	-0.4175\\
-2.965	-0.42\\
-2.965	-0.422499999999999\\
-2.9625	-0.414999999999999\\
-2.9625	-0.4175\\
-2.9625	-0.42\\
-2.9625	-0.422499999999999\\
-2.96	-0.4125\\
-2.96	-0.425000000000001\\
-1.405	-0.130000000000001\\
-1.405	-0.1325\\
-1.4025	-0.1275\\
-1.4025	-0.137499999999999\\
-1.395	-0.130000000000001\\
-1.395	-0.1325\\
1.395	-0.130000000000001\\
1.395	-0.1325\\
1.3975	-0.1275\\
1.3975	-0.130000000000001\\
1.3975	-0.1325\\
1.3975	-0.137499999999999\\
2.955	-0.414999999999999\\
2.955	-0.4175\\
2.955	-0.42\\
2.955	-0.422499999999999\\
2.9575	-0.4125\\
2.9575	-0.425000000000001\\
2.965	-0.414999999999999\\
2.965	-0.4175\\
2.965	-0.42\\
2.965	-0.422499999999999\\
4.6375	-0.715\\
4.6375	-0.717499999999999\\
4.6375	-0.720000000000001\\
4.64	-0.7125\\
4.64	-0.7225\\
6.37	-1\\
6.37	-1.0025\\
6.37	-1.005\\
6.37	-1.0075\\
6.3725	-0.9975\\
6.38	-0.9975\\
6.38	-1\\
6.38	-1.0025\\
6.38	-1.005\\
6.38	-1.0075\\
8.1175	-1.285\\
8.1175	-1.2875\\
8.1175	-1.29\\
8.1175	-1.2925\\
8.1175	-1.295\\
8.12	-1.2825\\
8.12	-1.2975\\
8.1225	-1.3\\
8.1275	-1.2825\\
8.1275	-1.285\\
8.1275	-1.2875\\
8.1275	-1.29\\
8.1275	-1.2925\\
8.1275	-1.295\\
8.1275	-1.2975\\
8.1275	-1.3\\
9.865	-1.585\\
9.865	-1.5875\\
9.865	-1.59\\
9.8675	-1.58\\
9.8675	-1.5825\\
9.8675	-1.5925\\
9.8675	-1.595\\
9.87	-1.5975\\
9.875	-1.575\\
9.875	-1.5775\\
9.875	-1.5825\\
9.875	-1.585\\
9.875	-1.5875\\
9.875	-1.59\\
9.875	-1.5925\\
9.875	-1.595\\
9.875	-1.5975\\
9.875	-1.6\\
11.6075	-1.8975\\
11.6075	-1.9\\
11.6075	-1.9025\\
11.6075	-1.905\\
11.61	-1.8925\\
11.61	-1.895\\
11.61	-1.9075\\
11.61	-1.91\\
11.6125	-1.89\\
11.6125	-1.9125\\
11.6125	-1.915\\
11.615	-1.8875\\
11.6175	-1.885\\
11.6175	-1.9175\\
11.62	-1.92\\
11.6225	-1.8825\\
11.6225	-1.8875\\
11.6225	-1.89\\
11.6225	-1.8925\\
11.6225	-1.895\\
11.6225	-1.8975\\
11.6225	-1.9\\
11.6225	-1.9025\\
11.6225	-1.905\\
11.6225	-1.9075\\
11.6225	-1.91\\
11.6225	-1.9125\\
11.6225	-1.915\\
11.6225	-1.9175\\
11.6225	-1.92\\
11.6425	-1.89\\
11.6425	-1.8925\\
11.6425	-1.895\\
11.6425	-1.8975\\
11.6425	-1.9\\
11.6425	-1.9025\\
11.6425	-1.905\\
11.6425	-1.9075\\
11.6425	-1.91\\
11.6425	-1.9125\\
11.645	-1.9075\\
};
\addlegendentry{\footnotesize Our algorithm\:}

\addplot [color=mycolor2, draw=none, mark size=0.5pt, mark=*, mark options={solid, black}, only marks]
  table[row sep=crcr]{%
-1.40028634825695	-0.132603009013669\\
1.40028634825695	-0.132603009013669\\
-2.95958730313506	-0.418766595829142\\
2.95958730313506	-0.418766595829142\\
-4.64320818119665	-0.717265044440312\\
4.64320818119665	-0.717265044440312\\
-6.37651806261088	-1.00390536971453\\
6.37651806261088	-1.00390536971453\\
-8.12589982881437	-1.29047174104079\\
8.12589982881437	-1.29047174104079\\
-9.87801083111123	-1.58738047888135\\
9.87801083111123	-1.58738047888135\\
-11.6265951763612	-1.90102583851946\\
11.6265951763612	-1.90102583851946\\
};
\addlegendentry{\footnotesize MatScat}

\addplot [color=mycolor3, draw=none, mark size=0.5pt, mark=*, mark options={solid, black}, forget plot]
  table[row sep=crcr]{%
-1.40028634825421	-0.132603009014058\\
1.40028634825421	-0.132603009014058\\
-2.9595873031987	-0.418766595745865\\
2.9595873031987	-0.418766595745865\\
-4.64320818116714	-0.717265044468375\\
4.64320818116714	-0.717265044468375\\
-6.37651806282499	-1.00390536979603\\
6.37651806282499	-1.00390536979603\\
-8.12589982866485	-1.29047174110078\\
8.12589982866485	-1.29047174110078\\
-9.87801083146072	-1.58738047857225\\
9.87801083146072	-1.58738047857225\\
-11.6265951764263	-1.90102583968939\\
11.6265951764263	-1.90102583968939\\
};
\end{axis}
\end{tikzpicture}%
	\caption{Comparison of the result of \cite{Bindel} (black) and our algorithm (blue) for a smooth trapping potential supported between $-1.2$ and $1.2$. The chosen parameter values are: $n=100$; threshold for resolvent norm: {$C=200$}; number of lattice points in the shown region of the complex plane: $M\times 10M = 1000\times 10000$.}
	\label{fig:Trapping}
\end{figure}

\paragraph{Limitations.}
As mentioned before, MatScat has been developed with the goal to create an efficient algorithm to compute resonances fast. Indeed, the computation of the black dots in Figure \ref{fig:Gauss_well} takes less than a second, while computing the regions with our algorithm takes several hours on a personal computer. We stress that our MATLAB code was written mainly for illustration purposes and that there is considerable room for improvement in numerical efficiency. 
Moreover, our algorithm can only yield reliable results in a certain region, as the following heuristic calculations make clear.
\begin{itemize}
\item\emph{Imaginary part of $z$:} Since the fundamental solution $G(x,z)=\f{1}{2iz}e^{\i z|x|}$ grows exponentially with $-\im(z)$ and $x\in[-a,a]$, a limit is reached when $|\im(z)|\sim \f{\log(2M)}{2a}$, where $M$ is the largest number the machine can store with adequate precision (for the interval $[-a,a]=[-1,1]$ and $M=10^{16}$ this bound yields $\im(z)\gtrsim -18.8$).

\item\emph{Real part of $z$:} Similarly, a natural bound on $\re(z)$ is reached when the period of $e^{\i z|x|}$ is less than twice the lattice spacing $\f{2}{n}$, i.e. when
$|\re(z)|\lesssim \pi n$ (for $n=30$ this bound yields $|\re(z)|\lesssim 94$).
\end{itemize}
Numerical experiments have confirmed the above bounds (see Figure \ref{fig:Bound_on_real_part}). Note that the bound on $\im(z)$ is fixed by the machine precision, while the bound on $|\re(z)|$ can be raised by increasing $n$.

\begin{remark}
We note that our algorithm is not restricted to one-dimension or real-valued potentials. Indeed,  the algorithm $\Gamma_n$ only uses the bound $\supp(q)\subset Q_M$, and higher dimensional implementations of $\Gamma_n$ can be coded similarly to the one-dimensional one.
\end{remark}
\begin{figure}[htbp]
	\centering
	\input{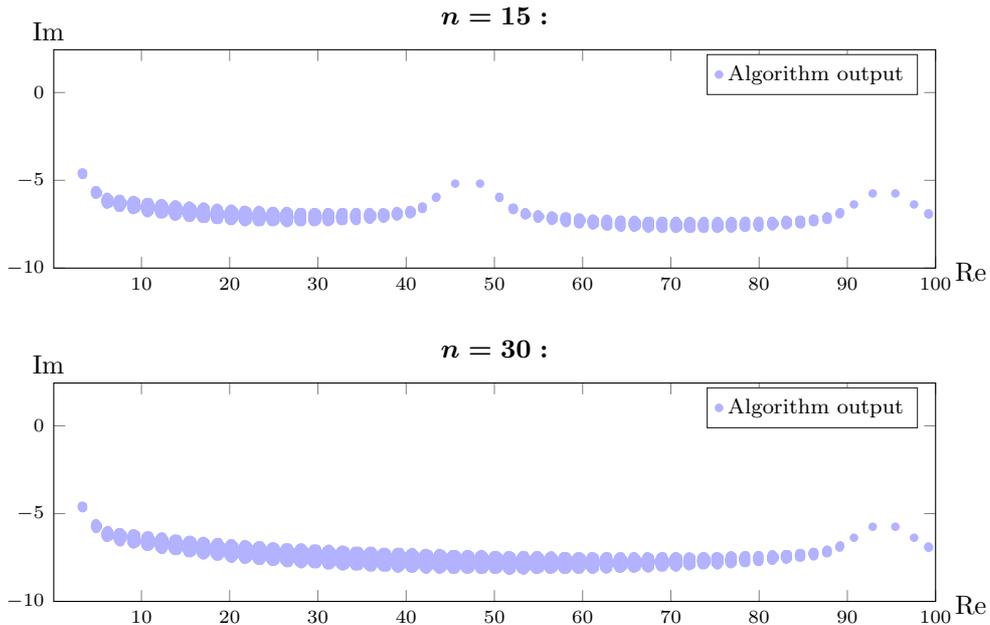}
	\caption{Numerical artefacts for large real part of $z$. Top: Output of our algorithm for Gaussian well potential on the interval $[-1,1]$ with $n=15$. Bottom: Output for the same problem with $n=30$. The locations of the spurious peaks agree with the bound $|\re(z)|\sim \pi n$ in each case.}
	\label{fig:Bound_on_real_part}
\end{figure}

\appendix

\section{Fundamental Solution}\label{app:greens}
In this appendix we  gather some well-known results about the fundamental solution for the Helmholtz equation. These facts are used to show that the abstract framework of Section \ref{sec:abstract} holds in the context of our algorithm as defined in Section \ref{sec:algorithm}, namely that eqs. \eqref{eq:K-KnPn}-\eqref{eq:K-PKP} hold.
We adopt the notation of \cite{AS} and write $f(\zeta)\sim \zeta^\nu$ if $f$ and $\zeta^\nu$ are \emph{asymptotically equal}, i.e. $|f(\zeta)-\zeta^\nu|=O(|\zeta|^{\nu+1})$ as $|\zeta|\to 0$.
\begin{remark}
	By the asymptotic expansion of the Hankel functions
	\begin{equation*}
		H^{(1)}_\nu(\zeta)\sim \begin{cases}
		-\f{\Gamma(\nu)}{\pi}\left(\f{\zeta}{2}\right)^{-\nu}, & \nu>0,\\
 			\f{2\i}{\pi}\log(\zeta), & \nu=0,
	 \end{cases}
	\end{equation*}
	where $\Gamma$ denotes the Gamma function and $\log$ denotes the principal branch of the logarithm (cf. \cite[Ch. 9.1.9]{AS}), we find that the fundamental solution \eqref{eq:Fundamental_Solution} satisfies the small $|x|$ asymptotics
	\begin{align*}
		G(x,z) &\sim -\f{\i\Gamma(\f{d-2}{2})}{\pi}\left(\f{z|x|}{2}\right)^{-\f{d-2}{2}}\f\i 4\left( \f{z}{2\pi |x|} \right)^{\f{d-2}{2}}  \\
		&= \f{\Gamma(\f{d-2}{2})}{4\pi^{\f n 2}}\f{1}{|x|^{d-2}} , \qquad  \text{as }|x|\to 0,
	\end{align*}
	for $n\geq 3$, and
	\begin{align*}
		G(x,z) &\sim -\f{1}{2\pi}\log(z|x|), \qquad \text{as }|x|\to 0,
	\end{align*}
	for $n=2$. Hence
	\begin{align}\label{eq:G_bound}
		|G(x,z)| &\leq C_z\cdot \begin{cases}
		\f{1}{|x|^{d-2}}, &n\geq 3,\\
 		 \log(|x|), &n=2,
	 \end{cases}
	\end{align}
	where $C_z>0$ is uniformly bounded for $z$ in a compact subset of $\C$. Similar formulas hold for the derivatives of $G$. Indeed, identities for Hankel functions (cf. \cite[Ch. 9.1.30]{AS}) show that
	\begin{align}\label{eq:G_bound_2}
		|\nabla G(x,z)| &\leq 
		\f{C_z}{|x|^{d-1}},\qquad\text{ for }d\geq 2.
	\end{align}
\end{remark}

\begin{remark}\label{remark:even_odd}
	From the representation of $G(x,z)$ in terms of Hankel functions it follows that $G$ can be  continued analytically in $z$ through the branch cut $\R_+$. In fact, it can be shown that $G$ can be continued to
	\begin{itemize}
		\item the Riemann surface of the complex square root, if $d$ is odd,
		\item the Riemann surface of the complex logarithm, if $d$ is even,
	\end{itemize}
	(cf. \cite[Ch. 3.1.4]{DZ}). The estimates \eqref{eq:G_bound} and \eqref{eq:G_bound_2} remain valid in either case.
\end{remark}

\end{document}